\numberwithin{equation}{section}
\newtheorem{thm}{Theorem}[section]
\newtheorem{lem}[thm]{Lemma}
\newtheorem{prop}[thm]{Proposition}
\newtheorem{cor}[thm]{Corollary}
\newtheorem{conj}[thm]{Conjecture}
\newcommand{\T}{\operatorname{T}}
\newcommand{\h}{\operatorname{ht}}
\newcommand{\R}{\mathbb R}
\newcommand{\C}{\mathbb C}
\newcommand{\N}{\mathbb N}
\newcommand{\Q}{\mathbb Q}
\newcommand{\SL}{\operatorname{SL}}
\newcommand{\Z}{ \mathbb{Z}}
\begin{document}
\title{Entropy and escape of mass for Hilbert modular spaces}

\author{Shirali Kadyrov}

\thanks{The author acknowledges support by the SNF (200021-127145).}

\begin{abstract}
We study the relation between metric entropy and escape of mass for the Hilbert modular spaces with the action of a diagonal element.
\end{abstract}

\maketitle

\section{Introduction}
Many interesting problems are related to equidistribution on homogeneous spaces. Often the ambient space is not compact, which leads to the question whether the limit measure is still a probability measure. One case of this so-called non-escape of mass problem is for a sequence of measures that are invariant under one parameter unipotent subgroups. In this case the answer is simple: for a sequence of invariant and ergodic measures under unipotent subgroups the limit measure is still a probability measure or the zero measure \cite{MozSha}. This fact relies on the quantitative non-divergences estimates for
 unipotents due to works of S.~G.~Dani~\cite{Dani} (further refined by G.~A.~Margulis and D.~Kleinbock~\cite{MarKle}). 
 
 In this paper we are interested in the dynamics of diagonal flows. Consider a sequence of probability measures invariant under a particular diagonal element of a linear group acting on the homogeneous space. In this case, the limit measure of the space could be any value in [0,1]. However, if additionally we assume that the measures have high entropy w.r.t. the diagonal element then one can show that the limit measure is not 0. This has been realized in \cite{TO} where M.~Einsiedler, E.~Lindenstrauss, Ph.~Michel, and A.~Venkatesh show the following.
\begin{thm}
\label{thm:torusorbit} Let $X$ be the unit tangent bundle to the modular surface and $T$ be the time 1-map for the geodesic flow. Then, any sequence of $T$-invariant probability measures $\mu_n$ with entropies $h_{\mu_n}(T)\geq c$ satisfies that any weak$^*$ limit $\mu_{\infty}$ has at least $\mu_{\infty}(X) \geq 2c-1$ mass left.
\end{thm}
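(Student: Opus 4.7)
My plan is to combine the reduction theory of the modular surface with a partition-based entropy estimate that separates the contribution of cusp excursions. Identify $X = \SL_2(\Z)\backslash\SL_2(\R)$, with $T$ acting by right-multiplication by $a = \operatorname{diag}(e^{1/2},e^{-1/2})$. For each $M>1$, reduction theory gives a decomposition $X = X_{\le M} \sqcup X_{>M}$, with $X_{\le M} := \{x : \h(x)\le M\}$ compact, and in the cuspidal neighbourhood $X_{>M}$ the $T$-action on the height coordinate is essentially $\h(Tx) = e^{\pm 1}\h(x)$.

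First I would fix $M$ large and choose a finite partition $\mathcal{P}=\mathcal{P}_M$ of $X$ whose restriction to $X_{\le M}$ consists of sets small enough to refine the Bowen $\varepsilon$-metric, together with the single atom $X_{>M}$. By the Kolmogorov--Sinai theorem it then suffices to bound
\[
h_{\mu_n}(T) \le h_{\mu_n}(T,\mathcal{P}_M) = \lim_{N\to\infty} \tfrac{1}{N}\, H_{\mu_n}\!\left(\textstyle\bigvee_{k=0}^{N-1}T^{-k}\mathcal{P}_M\right).
\]

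The heart of the proof is a cusp-counting lemma: for each $\beta\in[0,1]$, the number of non-empty atoms of $\mathcal{P}_M^{(N)} := \bigvee_{k=0}^{N-1} T^{-k}\mathcal{P}_M$ whose $T$-itinerary spends a fraction $\beta$ of the time inside $X_{>M}$ is at most $e^{N(1-\beta/2) + o_M(N)}$. Geometrically, high in the cusp the unstable horocyclic direction \emph{wraps around} the lattice at rate $e^{-\h}$, so an unstable Bowen ball of radius $1$ sitting at height $\ge M$ is identified by $\SL_2(\Z)$ down to diameter $\le e^{-M}$, effectively removing half of the local exponential complexity. Proving this lemma carefully, by an explicit Iwasawa-coordinate computation (or equivalently a quantitative non-divergence estimate), is the main geometric obstacle.

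Granted the lemma, a standard Stirling/convexity estimate bounds $\tfrac{1}{N}H_{\mu_n}(\mathcal{P}_M^{(N)})$ by $\sum_{\beta} \mu_n\{\beta\text{-types}\}\cdot(1-\beta/2) + o_M(1)$, and Birkhoff identifies this, in the limit, with $1 - \tfrac{1}{2}\mu_n(X_{>M}) + o_M(1)$. Hence
\[
c \le h_{\mu_n}(T) \le 1 - \tfrac{1}{2}\mu_n(X_{>M}) + o_M(1).
\]
Taking $\limsup_n$ and using $\liminf_n \mu_n(X_{>M}) \ge 1-\mu_\infty(X_{\le M})$, then letting $M\to\infty$, yields $c \le \tfrac{1}{2}\bigl(1+\mu_\infty(X)\bigr)$, which rearranges to the desired $\mu_\infty(X) \ge 2c-1$.
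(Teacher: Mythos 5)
Your overall strategy---compactify by height, count cusp-excursion itineraries, and credit a factor-of-two saving to the lattice wrap-around in the unstable horocyclic direction high in the cusp---is exactly the route of Einsiedler--Lindenstrauss--Michel--Venkatesh (and of the present paper, which generalizes it): your ``cusp-counting lemma'' is the analogue of Proposition~\ref{prop:mainn}, and your portmanteau/weak-$*$ step at the end is correct.

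The genuine gap is in the very first reduction. The Kolmogorov--Sinai theorem gives $h_{\mu_n}(T,\mathcal P_M)\le h_{\mu_n}(T)$, which is the opposite of what you need. To conclude $h_{\mu_n}(T)\le h_{\mu_n}(T,\mathcal P_M)$ you must show $\mathcal P_M$ is a (two-sided) generator for $\mu_n$, and with the single coarse atom $X_{>M}$ this is not automatic: two distinct points whose $T$-orbits never enter $X_{\le M}$ at a common time are never separated by $\bigvee_n T^{-n}\mathcal P_M$. One can try to make such pairs $\mu_n$-null via Poincar\'e recurrence, but that requires $\mu_n(X_{\le M})>0$, which need not hold for a fixed $M$ uniformly along a sequence escaping into the cusp---precisely the regime the theorem is about. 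The paper sidesteps this by bounding entropy from above through covers by forward Bowen $N$-balls (Lemma~\ref{lem:entropy}, a variant of Lemma~B.2 of \cite{TO}), which requires no generating partition, and handles the residual dependence on $M$ at the end of the proof of Theorem~\ref{thm:main} by introducing $M_\mu=\inf\{M>M_0:\mu(X_{<M})>0\}$ and extending the estimate down to and below $M_\mu$. You should either replace your Kolmogorov--Sinai step with this Bowen-ball bound, or supply an argument that $\mathcal P_M$ is an essential generator for each $\mu_n$ together with the uniform-in-$n$ bookkeeping on $M$. A smaller omission: the Birkhoff identification in your last paragraph needs the ergodic decomposition (both $h_\mu(T)$ and $\mu(X_{>M})$ are affine in $\mu$), since the $\mu_n$ are not assumed ergodic.
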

Here, $\mu_{\infty}$ is a weak$^*$ limit of the sequence $(\mu_n)_{n \ge 1}$ if for some subsequence $n_k$ and for all
$f \in C_c(X)$ we have $$\lim_{k\to \infty}\int_X f d\mu_{n_k} \to \int_X f d \mu_{\infty}.$$
In \cite{EinKad} M.~Einsiedler and the author prove a similar theorem for the space of three-dimensional lattices.
Our main goal in this paper is to extend Theorem~\ref{thm:torusorbit} to the following more general setup.

Let $F$ be an algebraic number field and let $\mathcal{O}$ be its ring of integers. Let $S^\infty=\{\sigma_1,...,\sigma_{r+s}\}$ be its archimedean places where $\{\sigma_1,...,\sigma_r\}$ are the real places and the rest are complex ones. Define
$$G:=\prod_{n=1}^r \SL_2(\R)\times \prod_{m=1}^s \SL_2(\C) \text{ and }\Gamma:=\SL_2(\mathcal{O}).$$
We have the natural embedding of $\Gamma$ into $G$ via 
$$\Delta:\gamma \to (\sigma_1(\gamma),\sigma_2(\gamma),\dots,\sigma_{r+s}(\gamma))$$ where $\sigma_j (\gamma)=\left( \begin{array}{cc} \sigma_j(a) &\sigma_j(b)  \\
 \sigma_j(c)&\sigma_j(d) \end{array} \right)$ for $\gamma=\left( \begin{array}{cc} a&b  \\
 c&d \end{array} \right)\in \Gamma$. Then $\Gamma$ becomes a lattice in $G$ (cf. Lemma~\ref{lem:mahler}). It is an irreducible lattice and the quotient space $X:=\Gamma \backslash G$ is non-compact. The bi-quotient $\Gamma \backslash \SL_2(\R)\times \SL_2(\R) \slash {\rm SO}(2) \times {\rm SO}(2)$ in the case of a real quadratic field $F$ over $\Q$ is known as Hilbert modular surface.
 
Let $a$ be any fixed diagonal element of $G$. Then there exist $a_j \in \R$ and $\theta_j \in [0,2\pi]$ such that 
$$a=diag(e^{i\theta_1}e^{a_1/2},e^{-i\theta_1}e^{-a_1/2})\times \cdots \times diag(e^{i\theta_{r+s}}e^{a_{r+s}/2},e^{-i\theta_{r+s}}e^{-a_{r+s}/2})$$
with $\theta_1,\dots,\theta_r=0.$ Now, we define the action of $\T$ on $X$ by $\T(x)=x\cdot a$.

In \S~\ref{sec:prelim} we define the height function $\h(\cdot)$ on $X$. Now, if we define $X_{<M}=\{x\in X \,:\, \h(x)<M\}$ then $X_{<M}$ becomes pre-compact (cf. Lemma~\ref{lem:mahler}). We similarly define $X_{\ge M}$. Now, we can state the main result.

Let $|a_1|+\cdots+|a_r|=h_r$ and $|a_{r+1}|+\cdots+|a_{r+s}|=h_s$. We note that the maximal metric entropy of $\T$ is $h_r+2h_s$, which we denoted by $h_{\max}(\T).$ 
\begin{thm}
\label{thm:main}
Let $M > \max\{e^{3h_{\max}(\T)},100\}$ be given.Then, there exists a continuous decreasing function $\phi:\R^+ \to \R$ with $\lim_{M\to \infty}\phi(M)=0$ such that 
$$\mu(X_{< M}) \ge 1-\frac{2}{h_{\max}(\T)} (h_{\max}(T)-h_{\mu}(T))+\phi(M)$$
for any $\T$-invariant probability measure $\mu$ on $X$.
In particular, for a sequence of $\T$-invariant measures $\mu_n$ with $h_{\mu_n}(\T) \ge h$ one has that any weak$^*$ limit $\mu_{\infty}$ has at least $\frac{2h}{h_{\max}(\T)}-1$ mass left.
\end{thm}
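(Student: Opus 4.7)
The plan is to adapt the entropy--escape-of-mass framework of Einsiedler--Lindenstrauss--Michel--Venkatesh for the modular surface (Theorem~\ref{thm:torusorbit}) and its $\SL_3$-extension in \cite{EinKad} to the Hilbert modular setting. The central ingredient will be a covering estimate for the cuspidal set $X_{\ge M}$ by Bowen $(N,\delta)$-balls of $\T$.

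First I would describe the cusp geometry. By Lemma~\ref{lem:mahler}, a point $x$ with $\h(x) \ge M$ represents a lattice whose image under some $\sigma_j$ contains a very short vector; reduction theory gives finitely many Siegel sets covering $X_{\ge M}$, and in each of them the stabilizer of a short vector is a unipotent subgroup $U \le G$ whose projection to the $j$-th archimedean factor is one-real-dimensional in $\SL_2(\R)$ and two-real-dimensional in $\SL_2(\C)$. The $\T$-orbit of such an $x$ stays close to a $U$-leaf while the point remains in the cusp, so Bowen balls only need to separate points in the directions transverse to $U$.

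With this in hand, I would establish a Bowen covering estimate of the form
\[ \mathcal{N}(X_{\ge M}, N, \delta) \;\le\; C\, M^{-\alpha}\, \exp\!\bigl(N\, h_{\max}(\T)/2\bigr), \]
where $\mathcal{N}(\cdot,N,\delta)$ denotes the minimum number of Bowen $(N,\delta)$-balls required and $C,\alpha>0$ depend only on $\delta$ and $G$. The factor $\tfrac12$ in the exponent is the key saving: along each archimedean factor $\T$ expands $U$ by $|a_j|$ per unit time, and this expansion is invisible to Bowen balls while the orbit remains in the cusp; summing $|a_j|$ over $j$ with weight $1$ at real places and $2$ at complex places reproduces exactly $\tfrac12 h_{\max}(\T)$. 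The polynomial decay $M^{-\alpha}$ quantifies how rarely an orbit returns to very large height. Combining this bound with a fine partition of $X_{<M}$ together with $X_{\ge M}$ as a single atom, and applying the Shannon--McMillan--Breiman theorem to the two-set partition $\{X_{<M},X_{\ge M}\}$, yields
\[ h_\mu(\T) \;\le\; \mu(X_{<M})\, h_{\max}(\T) + \mu(X_{\ge M})\, \tfrac{1}{2} h_{\max}(\T) - \phi(M), \]
for every $\T$-invariant probability measure $\mu$, with $\phi(M)\to 0$; solving for $\mu(X_{<M})$ using $\mu(X_{<M})+\mu(X_{\ge M})=1$ gives the asserted lower bound, and the escape-of-mass statement follows by passing to a weak$^*$ convergent subsequence and letting $M\to\infty$.

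The principal difficulty I anticipate is the covering estimate above. In a single $\SL_2(\R)$ factor this is essentially the argument of \cite{TO}, but in $G$ one must produce the exponent $\tfrac12 h_{\max}(\T)$ uniformly while tracking excursions in which different archimedean factors spend different fractions of time at large height, and must get the real-versus-complex weighting right so that each $\SL_2(\C)$-factor contributes $2|a_j|$ to $h_{\max}(\T)$. This last point reflects the real dimension of the unipotent at a complex place and must be carried through the combinatorial covering count, so that the resulting $\phi(M)$ genuinely vanishes at infinity.
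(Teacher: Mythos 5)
Your high-level strategy matches the paper's: bound entropy by Bowen-ball covering numbers, isolate a factor $\tfrac12 h_{\max}(\T)$ for the time spent in the cusp, then solve the resulting inequality for $\mu(X_{<M})$ and pass to a weak$^*$ limit. The final entropy inequality you write down is exactly the one in the paper (equation \eqref{eqn:mainthm}), and your identification of the real-versus-complex weighting and of the $\tfrac12$ saving as the crux of the matter is correct.

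However, there is a genuine gap at the heart of the argument. Your claimed covering estimate
$\mathcal N(X_{\ge M},N,\delta)\le C\,M^{-\alpha}\exp(N\,h_{\max}(\T)/2)$
is not well posed: $X_{\ge M}$ is a non-compact cusp, so no finite collection of Bowen $(N,\delta)$-balls covers it. What is actually needed, and what the paper proves, is a covering estimate for each element $Q(\mathcal V)\subset X_{<M}$ of the itinerary partition $Q_{M,N}=\bigvee_{n=0}^{N-1}\T^{-n}\{X_{<M},X_{\ge M}\}$, with the saving scaling with $|\mathcal V|$: roughly $e^{h_{\max}(\T)(N-|\mathcal V|/2)}$ Bowen balls suffice (Proposition~\ref{prop:mainn}). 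You gesture at this when you say one must ``track excursions in which different archimedean factors spend different fractions of time at large height,'' and you correctly call this the principal difficulty, but your proposal contains no mechanism to achieve it. The paper's solution is the refined partition $P_{M,N}$, which records for each cusp excursion the approximate value of the ratio $|v_j'|/|v_j''|$ at every archimedean place $j$; only after this refinement does Lemma~\ref{lem:res} give a uniform contraction in the unstable direction, and only then does Lemma~\ref{lem:relation} balance the per-place savings so that they add up to $\tfrac12 h_{\max}(\T)$. This is precisely the point where the Hilbert modular case differs from the modular surface (where the height is controlled by a single place), and it is the piece your plan has not actually produced.

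Two smaller points. First, the polynomial factor $M^{-\alpha}$ is not what comes out of the combinatorics: the number of itineraries contributes $e^{O(\log\log M/\log M)N}$, giving $\phi(M)=O(\log\log M/\log M)$, not a power saving in $M$; nothing in your Siegel-set sketch yields a power of $M$. Second, applying SMB to the two-set partition $\{X_{<M},X_{\ge M}\}$ only controls the entropy of that two-set partition (at most $\log 2$) and cannot, by itself, yield the bound on $h_\mu(\T)$; the paper instead uses the pointwise ergodic theorem to locate, for large $N$, a positive-measure set of points whose itinerary has $|\mathcal V|\gtrsim(\mu(X_{\ge M})-\epsilon)N$, and feeds the covering estimate for those itineraries into the Bowen-ball characterization of entropy (Lemma~\ref{lem:entropy}).
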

Whenever $h \in (h_{\max}(\T)/2,h_{\max}(\T)]$ there will be some mass left in the limit. We think that the theorem is sharp in the following sense: there should exists a sequence of $\T$-invariant probability measures $(\mu_n)_{n\ge 1}$ on $X$ with $\lim_{n\to \infty} h_{\mu_n}(\T)=h_{\max}(\T)/2$ such that the limit measure is the 0 measure. A similar construction has been carried out in \cite{const} for the space $\SL_n(\Z) \backslash \SL_n(\R)$ of unimodular lattices. Theorem~\ref{thm:main} suggests the following.
\begin{conj} Let $G'$ be a $\Q$-group and $\Gamma'$ be an arithmetic lattice of $\Q$-rank one. Let $T'$ be a right multiplication on $\Gamma' \backslash G'$ by a diagonalizable element in $G$. Then, any sequence of $T'$-invariant probability measures $\mu_n$ on $\Gamma' \backslash G'$ with entropies $h_{\mu_n} \ge c$ satisfies that any weak* limit $\mu_\infty$ has at least 
$$\mu_\infty(\Gamma' \backslash G') \ge 1-\frac{2}{h_{\max}(T')} (h_{\max}(T')-c)$$
 mass left where $h_{\max}(T')$ is the maximal metric entropy of $T'$.
\end{conj}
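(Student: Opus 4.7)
The plan is to mimic the modular-surface argument of Einsiedler--Lindenstrauss--Michel--Venkatesh, replacing the single cusp of $\SL_2(\Z)\backslash \SL_2(\R)$ by the finitely many cusps of the Hilbert modular space, and tracking how the rank-one structure of each cusp cuts the horospherical entropy in half. First I would fix a countable measurable partition $\mathcal{P}=\{P_0,P_1,P_2,\dots\}$ of $X$ of finite Kolmogorov--Sinai entropy whose diameters decrease with the height function: $P_0\subset X_{<M}$ is built from a Bowen-type covering of the pre-compact part $X_{<M}$ by small dynamical balls, and each $P_k$, $k\ge 1$, is supported in a level set $\{M\cdot 2^{k-1}\le \h(x)<M\cdot 2^k\}$ of the cusp. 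Using Lemma~\ref{lem:mahler} and standard reduction theory for $\Gamma$, $\mathcal{P}$ has entropy $H_\mu(\mathcal{P})$ which tends to $0$ in $\mu$-measure on the cusp; this is where the function $\phi(M)$ is going to come from.

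The heart of the argument is a combinatorial counting statement for the $n$-refined partition $\mathcal{P}^{(n)}:=\bigvee_{i=0}^{n-1}\T^{-i}\mathcal{P}$. I would classify the atoms of $\mathcal{P}^{(n)}$ by the ``itinerary vector'' recording, for each $0\le i<n$, which $P_k$ the point $\T^i x$ lies in. On the compact part $X_{<M}$, a Bowen volume estimate for the expanding horospherical subgroup of $a$ (whose expansion rate equals $h_{\max}(\T)=h_r+2h_s$) gives at most $e^{n(h_{\max}(\T)+\varepsilon(M))}$ admissible blocks. The crucial cusp estimate is that whenever the orbit spends time in $X_{\ge M}$, a point's excursion into a cusp is essentially one-parameter: the lattice associated to $x$ has a short vector that is expanded by $a$ and this short vector persists during the excursion, so the horospherical ball in the cusp is constrained to a Lagrangian sub-horocycle of half the full dimension. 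Consequently, an atom that spends $k$ of its first $n$ iterates in the cusp can be covered by at most $e^{k\cdot h_{\max}(\T)/2}\cdot e^{(n-k)h_{\max}(\T)}\cdot e^{o(n)}$ dynamical balls. This cusp bound is essentially Lemma~\ref{lem:mahler} combined with a Minkowski/Mahler-type short-vector argument for the $F$-lattices parametrized by $X$, and it is the main technical obstacle: carrying it out carefully requires a uniform version of non-divergence for the $a$-action on each archimedean factor and a careful accounting of the real and complex places, which is precisely why the exponent is $h_r+2h_s$.

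Given the counting estimates I would then apply Shannon--McMillan--Breiman to $\mu$ and $\mathcal{P}$. Writing $\alpha=\mu(X_{\ge M})$, the $\T$-invariance of $\mu$ and the Birkhoff ergodic theorem give that, on a set of $\mu$-measure close to $1$, a typical orbit spends a proportion $\alpha$ of its time in the cusp; hence the $\mu$-typical atom of $\mathcal{P}^{(n)}$ lies in the ``itinerary class'' of size at most $\exp\bigl(n\cdot[(1-\alpha)h_{\max}(\T)+\alpha\cdot h_{\max}(\T)/2]+o(n)\bigr)$. Taking logs, dividing by $n$, and letting $n\to\infty$ yields
\begin{equation*}
h_\mu(\T,\mathcal{P})\le \Bigl(1-\tfrac{\alpha}{2}\Bigr)h_{\max}(\T)+\varepsilon(M),
\end{equation*}
and since $\mathcal{P}$ is a generator modulo the $\phi(M)$ error coming from $H_\mu(\mathcal{P})$ at the top level, the same inequality holds for $h_\mu(\T)$. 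Rearranging for $\mu(X_{<M})=1-\alpha$ gives the bound stated in Theorem~\ref{thm:main}. The final clause about weak$^*$ limits then follows by sending $n\to\infty$ along the subsequence and then sending $M\to\infty$ using $\phi(M)\to 0$.

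The main obstacle, as indicated, is the cusp entropy bound: formalising ``excursions into the cusp have half the maximal expansion'' for the product $G=\prod \SL_2(\R)\times\prod \SL_2(\C)$ in a way that is uniform across the finitely many cusps of $\Gamma\backslash G$ and that interacts cleanly with the $\theta_j$ rotations in the complex places. Everything else—choice of partition, Shannon--McMillan--Breiman, variational manipulation—is standard once this geometric input is in place.
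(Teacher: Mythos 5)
The statement you were assigned is a \emph{Conjecture} in the paper: the author explicitly leaves it open, offered as a natural generalization of Theorem~\ref{thm:main}, and gives no proof of it (the only supporting remark is a pointer to a heuristic in \cite{TO}). Your proposal is, as you yourself say at the end (``gives the bound stated in Theorem~\ref{thm:main}''), a sketch of the Hilbert modular theorem, not of the conjecture. It therefore does not address the stated claim, which concerns an arbitrary $\Q$-group $G'$, an arbitrary arithmetic lattice $\Gamma'$ of $\Q$-rank one, and an arbitrary diagonalizable element.

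The gap between the two is not cosmetic. Every step of your sketch leans on structure specific to the Hilbert modular setting: the realization of $X$ as $\mathcal{O}$-submodules of $(\R^2)^r\times(\C^2)^s$, the explicit height $\h(\cdot)$ built from products of place-wise norms, Mahler compactness for $X_{<M}$ in that model (Lemma~\ref{lem:mahler}), and, decisively, Lemma~\ref{lem:shortvector}, which says there is at most one primitive short vector up to units. That ``single short vector'' fact is exactly what underwrites your key step, ``a point's excursion into a cusp is essentially one-parameter,'' producing the factor $h_{\max}(\T)/2$ during cusp excursions. For a general arithmetic quotient of $\Q$-rank one you do not have an $F$-lattice picture, a canonical height, or a ready-made uniqueness-of-short-vector lemma; constructing the right substitutes and establishing the associated quantitative cusp/non-divergence estimates in that generality is precisely the open content of the conjecture, not a detail to be filled in at the end. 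Your own observation that the cusp entropy bound ``is the main technical obstacle'' is correct — but for the conjecture, that is where the proof would have to \emph{begin}, and nothing in your write-up supplies it. (Even restricted to the Hilbert modular case, your Shannon--McMillan--Breiman/dyadic-partition route differs from the paper's: the paper bounds entropy directly by Bowen-ball covering numbers via Lemma~\ref{lem:entropy}, using the two-set partition $Q_{M,N}$ and its refinement $P_{M,N}$, the latter recording place-wise slopes of the short vector on each excursion; your multiscale partition would still need an analogue of Lemmas~\ref{lem:res}--\ref{lem:vol} to turn the half-dimension heuristic into an actual count.)
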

For the $\Q$-rank one case, the conjecture suggests that once the entropies of the measures are uniformly greater than $1/2$ of the maximal entropy, there is always some mass left in the limit. For a heuristic explanation we refer to Remark~5.2 in \cite{TO}.

Another interesting case studying the limits of a sequence of probability measures arises by averaging an arbitrary measure under iterates of some element of the ambient group. In this case, the notion of entropy does not make sense rather one has to consider the dimension of the measures.

For any group $H$ we define $B_{\epsilon}^{H}(g)$ to be the open ball in $H$ of radius $\epsilon>0$ centered at $g\in H$ and we simply write $B_{\epsilon}^{H}$ if the ball is centered at the identity 1. Let us consider the following subgroups of $G$
$$U^+=\{g\in G:a^{-n}g\ a^n\to 1 \text{ as } n\to -\infty\},$$
$$U^-=\{g\in G: a^{-n}g a^n\to 1 \text{ as } n\to \infty\},$$
$$L=\{g\in G: g a=a g \}.$$
We let $D:=\dim U^+ \le r+2s$. Let $d \in [0,D]$ be given and let us consider a probability measure $\nu$ in $X$ with the following property. For any $\delta>0$ there exists $\epsilon'>0$ such that for any $\epsilon<\epsilon'$ one has
$$\nu(xB_{\epsilon}^{U^+}B_\eta^{U^-L})\ll \epsilon^{d-\delta} \text{ for any } \eta \in (0,1) \text{ and for any } x\in X.$$
In this case say that $\nu$ has a \emph{dimension at least $d$ in the unstable direction}. 
Now, we consider the following sequence of measures $\mu_n$ defined by
$$\mu_n=\frac{1}{n}\sum_{j=0}^{n-1}\T^j_*\nu$$
where $\T^j_*\nu$ is the push-forward of $\nu$ under $\T^j$. We have
\begin{thm} 
\label{thm:dim}
For a fixed $d$ let $\nu$ be a probability measure of dimension at least $d$ in the unstable direction with respect to $a$, and let $\mu_n$ be as above. Then the sequence of probability measures $(\mu_n)_{n\ge 1}$ satisfies that any
weak$^*$ limit $\mu_{\infty}$ has at least $\mu_{\infty}(X) \geq  1- \frac{2a_*(D-d)}{h_{\max}(\T)}$ mass left where $a_*=\max\{|a_i|: i=1,\dots,r+s\}.$
\end{thm}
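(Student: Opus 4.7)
The plan is to derive a lower bound on the metric entropy of the limit measure and then invoke Theorem~\ref{thm:main}. Write $h^* := h_{\max}(\T) - a_*(D-d)$. I will show $h_{\mu_\infty}(\T) \ge h^*$; once this is in place, applying Theorem~\ref{thm:main} to $\mu_\infty$ (which is $\T$-invariant as a weak$^*$ limit of Ces\`aro averages) gives $\mu_\infty(X_{<M}) \ge 1 - \frac{2a_*(D-d)}{h_{\max}(\T)} + \phi(M)$ for each admissible $M$, and letting $M \to \infty$ yields the theorem.

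Fix a small $\epsilon > 0$ and a finite partition $\mathcal{Q}$ of $X$ of diameter $\epsilon$ with $\mu_\infty(\partial\mathcal{Q})=0$. Every atom of $\mathcal{Q}^n := \bigvee_{j=0}^{n-1}\T^{-j}\mathcal{Q}$ sits inside a Bowen ball $B_n(x,\epsilon)$, and in local product coordinates $U^+\cdot(U^-L)$ around a base point this Bowen ball is a box whose side in the $i$-th unstable direction has length $\asymp \epsilon e^{-(n-1)|a_i|}$ and whose $U^-L$-radius is $\asymp \epsilon$. Because the least-contracted side has length $\epsilon e^{-(n-1)a_*}$, the box can be covered by
$$\prod_{i}\bigl(e^{(n-1)(a_*-|a_i|)}\bigr)^{d_i} = e^{(n-1)(a_*D-h_{\max}(\T))}$$
translates of $B^{U^+}_{\epsilon e^{-(n-1)a_*}}B^{U^-L}_{\epsilon}$, where $d_i\in\{1,2\}$ records whether the $i$-th archimedean place is real or complex. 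The dimension hypothesis on $\nu$, applied at scale $\epsilon e^{-(n-1)a_*}$ with $\eta=\epsilon<1$, bounds the $\nu$-measure of each translate by $(\epsilon e^{-(n-1)a_*})^{d-\delta}$, so any atom $A$ of $\mathcal{Q}^n$ satisfies $\nu(A) \ll \epsilon^{d-\delta}\,e^{-(n-1)(h^*-\delta a_*)}$, whence
$$H_\nu(\mathcal{Q}^n) \ge (n-1)(h^* - \delta a_*) - C(\delta,\epsilon)$$
for all sufficiently large $n$.

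To transfer the estimate to $\mu_\infty$, I combine concavity of Shannon entropy in the measure with subadditivity of $H$ in the partition:
$$H_{\mu_m}(\mathcal{Q}^k) \ge \frac{1}{m}\sum_{j=0}^{m-1} H_\nu(\T^{-j}\mathcal{Q}^k) \ge \frac{1}{m}\Bigl(\sum_{j=m}^{m+k-1}H_\nu(\mathcal{Q}^j) - \sum_{j=0}^{k-1}H_\nu(\mathcal{Q}^j)\Bigr),$$
the second inequality telescoping $H_\nu(\mathcal{Q}^{j+k}) \le H_\nu(\mathcal{Q}^j) + H_\nu(\T^{-j}\mathcal{Q}^k)$. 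The first sum is bounded below by $km(h^*-\delta a_*) - o(m)$ by the previous step, while the second is $O(k^2\log|\mathcal{Q}|)$; sending $m \to \infty$ along the subsequence defining $\mu_\infty$, and using weak$^*$ continuity of $\mu \mapsto H_\mu(\mathcal{Q}^k)$ for continuity partitions, yields $H_{\mu_\infty}(\mathcal{Q}^k) \ge k(h^*-\delta a_*)$ for every $k$. Hence $h_{\mu_\infty}(\T,\mathcal{Q}) = \inf_k H_{\mu_\infty}(\mathcal{Q}^k)/k \ge h^*-\delta a_*$, and letting $\delta \to 0$ gives $h_{\mu_\infty}(\T) \ge h^*$, as required.

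The main obstacle is the covering step: the dimension hypothesis is phrased for \emph{isotropic} $U^+$-balls, while Bowen balls are genuinely anisotropic with distinct rates $|a_i|$ across the archimedean places. Balancing the best isotropic cover at the worst scale $\epsilon e^{-(n-1)a_*}$ against the anisotropic product volume is precisely what produces the correction $a_*(D-d)$ and forces $a_*$ (rather than any individual $|a_i|$) into the final bound. The remaining manipulations — concavity, subadditivity, weak$^*$ continuity, and passage to the limit — are routine once this covering estimate is in hand.
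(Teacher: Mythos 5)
Your plan is to show $h_{\mu_\infty}(\T)\ge h^*$ and then feed this into Theorem~\ref{thm:main}, but this is circular: Theorem~\ref{thm:main} is stated only for $\T$-invariant \emph{probability} measures, and whether $\mu_\infty$ is a probability measure is precisely the question. If some mass has escaped and you normalize by $\mu_\infty(X)$, the resulting bound $\bar\mu(X_{<M})\ge 1-\tfrac{2a_*(D-d)}{h_{\max}(\T)}+\phi(M)$ becomes vacuous after $M\to\infty$ (it reduces to $1\ge 1-\tfrac{2a_*(D-d)}{h_{\max}(\T)}$), and you learn nothing about $\mu_\infty(X)$. The ``in particular'' clause of Theorem~\ref{thm:main} would give escape-of-mass control if you could bound $h_{\mu_n}(\T)$ from below for the actual Ces\`aro averages $\mu_n$, but those are \emph{not} $\T$-invariant, so that clause does not apply either.

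The entropy estimate itself also does not survive non-compactness. A finite partition $\mathcal{Q}$ of $X$ necessarily has at least one atom of infinite diameter near the cusp; for atoms of $\mathcal{Q}^n$ that meet this cusp atom (or its $\T^{-j}$-preimages), the claim that they ``sit inside a Bowen ball $B_n(x,\epsilon)$'' is false, and the covering-by-isotropic-boxes step breaks down. Likewise, the weak$^*$ continuity you invoke for $\mu\mapsto H_\mu(\mathcal{Q}^k)$ requires $\mu_\infty$ to be a probability measure and all atoms to be $\mu_\infty$-continuity sets; when mass leaks into the cusp, $\mu_m(A_\infty)$ for the unbounded atom $A_\infty$ need not converge to $\mu_\infty(A_\infty)$, so the limit $H_{\mu_m}(\mathcal{Q}^k)\to H_{\mu_\infty}(\mathcal{Q}^k)$ is unjustified. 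The paper sidesteps all three difficulties by never computing an entropy of $\mu_\infty$: it bounds $\nu(\{x\in X_{<M}: |\mathcal V_x|>\kappa N\})$ directly (Lemma~\ref{lem:kappa}, built from Proposition~\ref{prop:main} and Lemma~\ref{lem:balls}) and then estimates $\mu_N(X_{\ge M})\le \kappa+\nu(X_{\ge M})+K(M,\delta)e^{cN}$, choosing $\kappa,\delta,M$ so the last term decays and passing to the weak$^*$ limit. Your anisotropic-covering estimate is essentially the content of Lemma~\ref{lem:balls}, so it is the right local ingredient; what is missing is a global bookkeeping that works on a non-compact space and for a measure that may not be a probability measure in the limit.
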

In particular, if $\nu$ has full dimension, that is if $d=D$, then the limit $\mu_{\infty}$ is a probability measure. In this case with a minor additional assumption on $\nu$ one in fact obtains the equidistribution result, that is, the limit measure $\mu_{\infty}$ is the Haar measure \cite{Shi}. We say that an element $x \in X$ is {\it divergent on average with respect to} $a$ if $\lim_{N\to\infty}\frac{1}{N}\{n\in [0,N-1] : \T^n(x) \in K\}=0$ for any compact set $K$ in $X$. 

We note that if we have a measure $\nu$ as above for some $d$ which is supported in the set of points in $X$ that diverge on average then clearly any limit $\mu_\infty$of $(\mu_n)_{\ge 1}$ is the zero measure which implies that $d \le D-\frac{h_{\max}(\T)}{2a_*}.$ This hints the following.
\begin{cor}
The Hausdorff dimension of the points in $X$ that are divergent on average w.r.t $a$ is at most $ \dim G -\frac{h_{\max}(\T)}{2a_*}.$
\end{cor}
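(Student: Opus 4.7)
The plan is to argue by contradiction, combining Theorem~\ref{thm:dim} with Frostman's lemma, exactly as hinted in the paragraph preceding the corollary. Let $E \subset X$ be the (Borel) set of points that are divergent on average with respect to $a$, set $c := h_{\max}(\T)/(2a_*)$, and suppose toward a contradiction that $\dim_H E > \dim G - c$. Pick $d'$ with $\dim G - c < d' < \dim_H E$. Frostman's lemma then produces a compactly supported probability measure $\nu$ on $E$ satisfying $\nu(B_r(x)) \ll r^{d'}$ for every $x \in X$ and all sufficiently small $r > 0$, where $B_r(x)$ denotes the ball of radius $r$ in a fixed right-invariant Riemannian metric descended from $G$ to $X$.

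The next step is to upgrade this isotropic Frostman estimate to a bound on the anisotropic flow boxes appearing in the definition of dimension in the unstable direction. Because $U^+ \times U^- L \to G$ is a local diffeomorphism near the identity, a box $x B_\epsilon^{U^+} B_\eta^{U^-L}$ is contained in a single Riemannian ball of radius $O(\epsilon)$ when $\eta \le \epsilon$, and for $\eta \ge \epsilon$ can be covered by $\ll (\eta/\epsilon)^{\dim G - D}$ such balls. Applying the Frostman bound to each ball and using $\eta < 1$ gives, uniformly in $x \in X$ and $\eta \in (0,1)$,
$$\nu\bigl(x B_\epsilon^{U^+} B_\eta^{U^-L}\bigr) \ll \epsilon^{d' - (\dim G - D)}.$$
Thus $\nu$ has dimension in the unstable direction at least $d := d' - (\dim G - D)$, and the choice of $d'$ forces $d > D - c$.

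Form the averages $\mu_n := \frac{1}{n}\sum_{j=0}^{n-1} \T^j_* \nu$. For every $x \in \mathrm{supp}(\nu) \subset E$ and every compact $K \subset X$ the ratio $\frac{1}{n}\sum_{j=0}^{n-1} \mathbf{1}_K(\T^j x)$ tends to $0$, so Fubini together with bounded convergence yields $\mu_n(K) \to 0$, and hence any weak$^*$ limit $\mu_\infty$ of $(\mu_n)$ is the zero measure. On the other hand, Theorem~\ref{thm:dim} applied to this $\nu$ and these $\mu_n$ gives
$$\mu_\infty(X) \ge 1 - \frac{2 a_* (D - d)}{h_{\max}(\T)} > 0,$$
since $D - d < c = h_{\max}(\T)/(2a_*)$. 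This contradiction proves $\dim_H E \le \dim G - h_{\max}(\T)/(2a_*)$.

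The main obstacle is the second step: converting an isotropic Frostman bound into a uniform estimate on the thin boxes $xB_\epsilon^{U^+}B_\eta^{U^-L}$ at every $x \in X$ and every $\eta \in (0,1)$. This depends on the local product structure of $G$ (so that such a box is comparable to the image of a Euclidean box under a map with bounded Jacobian) and on the fact that the Frostman inequality, which Frostman's lemma delivers for balls centered on $\mathrm{supp}(\nu)$, extends to all centers $x \in X$ by a standard doubling argument. Once Step~2 is in place, everything else is essentially bookkeeping on top of Theorem~\ref{thm:dim}.
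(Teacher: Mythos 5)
Your proof is correct and is exactly the argument the paper has in mind: the paper's one-line sketch points to Frostman's lemma (Falconer, Corollary~4.12) combined with Theorem~\ref{thm:dim}, which is precisely the contradiction you set up, and the technical heart — converting the isotropic Frostman bound $\nu(B_r(x))\ll r^{d'}$ into the anisotropic flow-box estimate $\nu(xB_\epsilon^{U^+}B_\eta^{U^-L})\ll\epsilon^{d'-(\dim G-D)}$ via the local product structure — is the same covering argument carried out in the cited \cite[Corollary~1.7]{EinKad}. One small remark: the "doubling" step you flag as a potential worry is actually immediate, since Frostman's lemma (in the mass-distribution form) already gives $\nu(U)\ll|U|^{d'}$ for arbitrary sets $U$, hence for balls centered anywhere.
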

 The proof of the corollary is easily obtained from Theorem~\ref{thm:dim} using \cite[Corollary~4.12]{Fal} and is left to the reader (cf. \cite[Corollary~1.7]{EinKad}).

In the next section we will consider some basic facts. In \S~\ref{sec:mainthm} we state the main ingredients and show how one deduces Theroem~\ref{thm:main}. In \S~\ref{sec:partitions} we introduce the partitions and count the number of elements in these partitions. In \S~\ref{sec:mainprop} we obtain the main proposition and finally, in \S~\ref{sec:dim} we indicate how one proves Theorem~\ref{thm:dim}.

{\bf Acknowledgements:} This work is part of the author's doctoral dissertation at The Ohio State University. The author would like to thank his adviser M. Einsiedler for useful conversations. He also would like to thank the referee for useful comments which in particular helped to improve the results of the previous version of the paper.
\section{Preliminaries}\label{sec:prelim}
We consider the space $X$ as a subspace of the space of $\mathcal{O}$-submodules $\Lambda$ of $(\R^2)^r\times (\C^2)^s$ with the following properties:
\begin{enumerate}
\item $\Lambda$ is an $\mathcal O$-submodule generated by two vectors $v,w$ of $(\R^2)^r\times (\C^2)^s$,
\item $v=(v_1',v_1'')\times (v_2',v_2'')\times \cdots \times (v_{r+s}',v_{r+s}'')$ and $w=(w_1',w_1'')\times (w_2',w_2'')\times \cdots \times (w_{r+s}',w_{r+s}'')$ are such that $\det\left( \begin{array}{cc} v_j'&v_j''  \\
 w_j'&w_j'' \end{array} \right)=1$ for $j=1,...,r+s$.
\end{enumerate}
From now on, we use a standard notation  $v=(v_1',v_1'')\times (v_2',v_2'')\times \cdots \times (v_{r+s}',v_{r+s}'')$ for a vector $v\in (\R^2)^r\times (\C^2)^s$. A similar notation is used for $w  \in (\R^2)^r\times (\C^2)^s.$
The action of $\mathcal{O}$ on $(\R^2)^r\times (\C^2)^s$ is given by $\lambda \cdot v=$
$$(\sigma_1(\lambda)v_1',\sigma_1(\lambda)v_1'')\times (\sigma_2(\lambda)v_2',\sigma_2(\lambda)v_2'')\times \cdots \times (\sigma_{r+s}(\lambda)v_{r+s}',\sigma_{r+s}(\lambda)v_{r+s}'')$$
for any $\lambda\in \mathcal{O}$ and any $v \in (\R^2)^r\times (\C^2)^s.$

Now, we define the \emph{height} function $\h(\cdot)$ from $X$ to $\R^+$ as follows. On $\R$ and on $\C$ we consider the usual absolute value $|\cdot |$ and for any $(v_j',v_j'')$ in $\R^2$ or in $ \C^2$ by the norm $|\cdot|$ we mean $|(v_j',v_j'')|=\max\{|v_j'|,|v_j''|\}$. For a vector $v=(v_1',v_1'')\times (v_2',v_2'')\times \cdots \times (
v_{r+s}',v_{r+s}'')$ in an $\mathcal O$-submodule $\Lambda \in X$ we define the `norm' by
 $$\|v\|=\prod_{j=1}^{r+s} |(v_j',v_j'')|^{\delta_j}$$
where
$$\delta_j = \begin{cases} 1, & \mbox{if } j\in\{1,2,\dots,r\} \\ 2, & \mbox{if } j\in\{r+1,r+2,\dots,r+s\} \end{cases}.$$ 
Now, we define the height of $\Lambda$:
 $$\h(\Lambda):=\max\{\|v\|^{-1}\,:\, v \in \Lambda-\{0\}\}.$$ 
We note that this is well defined as $\|v\| \neq 0$ whenever $v \neq 0.$\\
{\bf Definition.}
A nonzero vector $v$ in an $\mathcal O$-submodule $\Lambda$ is said to be \emph{primitive} if $(F  v) \cap \Lambda=\mathcal{O}v$.
\begin{lem}
\label{lem:shortvector}
Up to multiplication by units, for any element $\Lambda \in X$ there can be at most one primitive (short) vector of norm $< 1$.
\end{lem}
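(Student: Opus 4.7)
I would prove the lemma by showing that any two primitive vectors $v_1,v_2\in\Lambda$ with $\|v_i\|<1$ must be unit-multiples of each other. The argument splits on whether $v_1,v_2$ are $F$-linearly dependent.

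The $F$-dependent case is short. If $v_2=\lambda v_1$ with $\lambda\in F^*$, then $v_2\in(Fv_1)\cap\Lambda=\mathcal{O}v_1$ by primitivity of $v_1$, so $\lambda\in\mathcal{O}$; symmetrically, primitivity of $v_2$ applied to $v_1=\lambda^{-1}v_2$ gives $\lambda^{-1}\in\mathcal{O}$; hence $\lambda\in\mathcal{O}^*$, as desired.

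In the $F$-independent case I aim for a contradiction. Using the given generators $v,w$ of $\Lambda$, I write $v_1=\alpha v+\beta w$ and $v_2=\gamma v+\delta w$ with $\alpha,\beta,\gamma,\delta\in\mathcal{O}$, and set $d:=\alpha\delta-\beta\gamma\in\mathcal{O}\setminus\{0\}$. The key observation is that at each archimedean place $j$, the normalization $\det\bigl(\begin{smallmatrix}v_j'&v_j''\\ w_j'&w_j''\end{smallmatrix}\bigr)=1$ makes
\[
\sigma_j(d)=(v_1)_j'(v_2)_j''-(v_1)_j''(v_2)_j',
\]
a symplectic pairing on the local components. Bounding $|\sigma_j(d)|\ll|(v_1)_j||(v_2)_j|$ place by place, raising to the local weight $\delta_j$, and multiplying across $j$ yields $|N(d)|\ll\|v_1\|\|v_2\|$; combined with $|N(d)|\ge 1$ (since $d$ is a nonzero algebraic integer), this contradicts $\|v_i\|<1$.

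The main obstacle is the absolute constant hidden in the place-wise bound $|\sigma_j(d)|\ll|(v_1)_j||(v_2)_j|$. With the max norm used in the paper, the sharp estimate is $|\sigma_j(d)|\le 2|(v_1)_j||(v_2)_j|$, losing a factor of $2$ per real place and $4$ per complex place, which eats the budget that the threshold ``$\|v_i\|<1$'' provides. I would recover the sharp constant by first applying Dirichlet's unit theorem to replace $v_1,v_2$ by unit-multiples (preserving $\|v_i\|$) whose local components $|(v_i)_j|$ are nearly balanced across places, then using a tighter Cauchy--Schwarz estimate $|\sigma_j(d)|\le|(v_1)_j|_2|(v_2)_j|_2$ in the balanced regime; alternatively one could invoke Minkowski's second theorem on the sublattice $\mathcal{O}v_1+\mathcal{O}v_2\subseteq\Lambda$, which has covolume $|N(d)|$ times that of $\Lambda$. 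The algebraic ingredients (primitivity, the $\sigma_j(d)$ identity, and $|N(d)|\ge 1$) are routine; closing the geometry-of-numbers step with the right constant is the delicate part.
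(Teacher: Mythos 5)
Your core argument is the same as the paper's: express the two short primitive vectors over the $\mathcal{O}$-basis $v,w$, use $\det(\begin{smallmatrix}v_j'&v_j''\\w_j'&w_j''\end{smallmatrix})=1$ to identify the place-wise determinant $(e_j'f_j''-e_j''f_j')$ with $\sigma_j$ of a nonzero algebraic integer $d$, get $|N(d)|\ge 1$, and contradict this by bounding the determinant at each place via Cauchy--Schwarz. Your handling of the $F$-dependent case is also fine and simply spells out what the paper folds into its contradiction argument.

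Where you genuinely diverge from the paper is in the constant, and your worry is well founded. The paper asserts $|e_j'f_j''-e_j''f_j'|\le|(e_j',e_j'')|\cdot|(f_j',f_j'')|$ and calls this Cauchy--Schwarz, but that inequality is Cauchy--Schwarz only for the Euclidean norm on the component $(v_j',v_j'')$; with the $\max$ norm the paper defines, the sharp place-wise bound is $2|(e_j',e_j'')|\,|(f_j',f_j'')|$, giving $|N(d)|\le 2^{r+2s}\|e\|\|f\|$, which is not in contradiction with $\|e\|,\|f\|<1$. In fact, with the $\max$ norm the lemma at threshold $1$ literally fails: for $F=\Q$ take $\Lambda=\Z(a,a)+\Z(-\tfrac{1}{2a},\tfrac{1}{2a})$ with $a\in(\tfrac12,1)$; both basis vectors are primitive of $\max$-norm $<1$ and are not $\pm$-related. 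So you are right that the paper's proof has a gap here.

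However, neither of your proposed repairs closes it. Multiplying by a unit $\epsilon$ rescales every component $(v_j',v_j'')$ by a scalar $\sigma_j(\epsilon)$, which leaves the ratio $|(v_j',v_j'')|_2/|(v_j',v_j'')|_\infty$ unchanged; the loss of $\sqrt{2}^{\,\delta_j}$ is a per-place phenomenon governed by the angle of $(v_j',v_j'')$, not by the balance of magnitudes across places, so Dirichlet balancing cannot recover it. And Minkowski's second theorem on $\mathcal{O}v_1+\mathcal{O}v_2$ comes with its own volume constants and compares Euclidean successive minima, not the multiplicative ``height'' $\prod_j|(v_j',v_j'')|^{\delta_j}$, so the $2^{r+2s}$ reappears there too. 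The clean fix is simply to take $|(v_j',v_j'')|$ to be the Euclidean norm of the pair, which makes Cauchy--Schwarz exact and the proof close at threshold $1$ (with no effect on the rest of the paper, where only $X_{<M}$ for large $M$ matters); equivalently, keep the $\max$ norm but state the lemma with threshold $2^{-(r+2s)}$ and absorb the constant into $M$ downstream.
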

Having only one short vector is crucial throughout  the paper. Obtaining similar results as in this paper for spaces that allow more than one primitive short vectors requires different techniques (cf. \cite{EinKad}). 
\begin{proof}
Assume by contradiction that there are two distinct primitive vectors $e,f \in \Lambda$ such that $\|e\|<1,\|f\|<1$ up to multiplication by units. Let $e=(e_1',e_1'')\times (e_2',e_2'')\times \cdots \times (e_{r+s}',e_{r+s}'')$ and $f=(f_1',f_1'')\times (f_2',f_2'')\times \cdots \times (f_{r+s}',f_{r+s}'')$. We pick $v=(v_1',v_1'')\times (v_2',v_2'')\times \cdots \times (v_{r+s}',v_{r+s}'')$ and $w=(w_1',w_1'')\times (w_2',w_2'')\times \cdots \times (w_{r+s}',w_{r+s}'')$ which generate $\Lambda$ over $\mathcal O$ as a submodule and satisfy the property (ii). There are $\lambda_1,\lambda_2,\nu_1,\nu_2 \in \mathcal O$ such that $e=\lambda_1 v + \lambda_2 w$ and $f=\nu_1 v+ \nu_2 w$. We have
\[\prod_{j=1}^{r+s}\det\left(\begin{array}{cc} e_j'&e_j''\\
f_j' & f_j''\end{array}\right)^{\delta_j}=\prod_{j=1}^{r+s}\det\left(\left(\begin{array}{cc} \sigma_j(\lambda_1) &\sigma_j(\lambda_2) \\
 \sigma_j(\nu_1) &\sigma_j(\nu_2) \end{array}\right)\left(\begin{array}{cc}v_j' &v_j''\\
 w_j' &w_j'' \end{array}\right)\right)^{\delta_j}\]
where $\{\sigma_1,...,\sigma_{r+s}\}=S^\infty$. Since, $\det\left(\begin{array}{cc}v_j' &v_j''\\
 w_j' &w_j'' \end{array}\right)=1$ for $j=1,2,\dots, r+s$ we must have
$$\prod_{j=1}^{r+s}(e_j'f_j''-e_j''f_j')^{\delta_j}=\prod_{j=1}^{r+s}\sigma_j(\lambda_2 \nu_1-\lambda_1 \nu_2)^{\delta_j}.$$
We now claim that $\lambda_2\nu_1 \neq \lambda_1 \nu_2$. Otherwise, we see that $\frac{\nu_1}{\lambda_1} e=\nu_1 v+\frac{\nu_1\lambda_2}{\lambda_1}w=f$ where without loss of generality we assumed that $\lambda_1 \neq 0.$ Then, we have $f \in (F e)\cap \Lambda=\mathcal O e $ and $e \in (F f)\cap \Lambda=\mathcal O f$ which imply, upto multiplication by units, that $e$ and $f$ are the same which is a contradiction. 

Since $\lambda_1,\lambda_2,\nu_1,\nu_2 \in \mathcal{O}$, from the above claim we obtain that $$\prod_{j=1}^{r+s}\sigma_j(\lambda_2\nu_1-\lambda_1 \nu_2)^{\delta_j}=N(\lambda_2\nu_1-\lambda_1 \nu_2)\ge 1$$ where $N(\cdot)$ is the number theoretic norm. It follows that
\begin{equation}
\label{eqn:prod}
\prod_{j=1}^{r+s}(e_j'f_j''-e_j''f_j')^{\delta_j}\ge 1.
\end{equation}
From Cauchy-Schwartz inequality we see that
$$|(e_j'f_j''-e_j''f_j')| \le |(e_j',e_j'')|\cdot|(f_j',f_j'')|.$$
Hence,
$$|\prod_{j=1}^{r+s}(e_j'f_j''-e_j''f_j')^{\delta_j}| \le \prod_{j=1}^{r+s}|(e_j',e_j'')^{\delta_j}|\cdot\prod_{j=1}^{r+s}|(f_j',f_j'')^{\delta_j}|=\|e\|\|f\|<1.$$
Thus, we obtain a contradiction to (\ref{eqn:prod}). Therefore, up to multiplication by units, there can be at most one primitive short vector of norm $<1$.
\end{proof}
We will need the following well known fact (see for example \cite{Sel}).
\begin{lem}
\label{lem:mahler}
$\Gamma$ is a lattice in $G$ and $X_{<M}$ is pre-compact.
\end{lem}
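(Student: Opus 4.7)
Both assertions are classical reduction-theory facts about arithmetic groups; I would combine a general structural input with a direct Mahler-type argument tailored to the description of $X$ given above.

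For the lattice statement, I would identify $G$ with the identity component of $\mathbf{G}(\R)$, where $\mathbf{G}:=\operatorname{Res}_{F/\Q}\SL_2$ is the $\Q$-group obtained by restriction of scalars from $F$ to $\Q$. Under the diagonal embedding $\Delta$, the group $\Gamma=\SL_2(\mathcal{O})$ is precisely $\mathbf{G}(\Z)$. Since restriction of scalars preserves semisimplicity and $\mathbf{G}$ has no nontrivial $\Q$-characters, the Borel--Harish-Chandra theorem yields that $\Gamma$ has finite covolume in $G$, and hence $\Gamma$ is a lattice.

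For pre-compactness of $X_{<M}$, I would run a direct Mahler-type argument from the description of $X$ above. Let $(\Lambda_n)\subset X_{<M}$; by definition every nonzero $v\in\Lambda_n$ satisfies $\|v\|\ge 1/M$. Choose a primitive vector $e_n\in\Lambda_n$ of smallest norm, and complete it to a pair $(e_n,f_n)$ generating $\Lambda_n$ and satisfying condition~(ii); this is possible because $\mathcal{O}$ is Dedekind and the product-of-determinants condition trivializes the Steinitz class. Next I would normalize $e_n$ via Dirichlet's unit theorem: the log-embedding of $\mathcal{O}^\times$ into $\R^{r+s}$ is a lattice in the trace-zero hyperplane, so after replacing $e_n$ by $\lambda e_n$ for a suitable unit $\lambda$, each coordinate factor $|(e_{n,j}',e_{n,j}'')|^{\delta_j}$ lies in a fixed compact interval (this uses $1/M\le\|e_n\|\le 1$ since $e_n$ is the shortest vector). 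The determinant condition together with the lower bound $\|f_n+\mu e_n\|\ge 1/M$ for every $\mu\in\mathcal{O}$ then lets me adjust $f_n$ by an element of $\mathcal{O}e_n$ to obtain a bounded representative. Bolzano--Weierstrass produces a convergent subsequence, whose limit lies in $X$ because conditions (i) and (ii) are closed.

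The main obstacle is the simultaneous normalization of $e_n$ and $f_n$: Dirichlet's theorem reduces $e_n$ coordinate-wise, but controlling $f_n$ requires carefully exploiting the rigidity of the product determinant condition together with the short-vector bound $\|f_n+\mu e_n\|\ge 1/M$ applied to $\mathcal{O}$-translates of $f_n$. Once this bookkeeping is in place, the argument is a standard extension of Mahler's criterion to the Hilbert modular setting.
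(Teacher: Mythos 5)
Your lattice argument and the paper's are the same in substance: identify $G$ with $(\operatorname{Res}_{F/\Q}\SL_2)(\R)$ and $\Gamma$ with the integer points, so finite covolume follows from Borel--Harish-Chandra (the paper simply cites this via the embedding of $G$ as a $\Q$-group in $\SL_{2(r+2s)}(\R)$). For pre-compactness you diverge from the paper: the paper transports $X$ into $\SL_{2(r+2s)}(\Z)\backslash\SL_{2(r+2s)}(\R)$, observes that a lower bound $\|v\|\ge 1/M$ on the product norm forces a lower bound on the ordinary Euclidean norm (each factor is at most the Euclidean norm, so the product is at most a power of it), and then invokes Mahler's criterion on the ambient space of unimodular $\Z$-lattices, using that $X$ is closed there. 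You instead run a direct reduction-theory argument on the module description of $X$ (Minkowski $+$ Dirichlet units on $e_n$, then translation of $f_n$ by $\mathcal O e_n$). Both routes are classical; yours is more self-contained and makes the geometry of $X$ visible, the paper's is shorter because it outsources compactness to Mahler.

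Two details you should tighten. First, the inequality $\|e_n\|\le 1$ for the shortest vector is not automatic; the correct statement is $\|e_n\|\le c_F$ for a constant depending only on $F$ (Minkowski's theorem applied to the rank-$2(r+2s)$ $\Z$-lattice underlying $\Lambda_n$, whose covolume is fixed by the determinant condition and $\operatorname{disc}(F)$, gives a short vector in Euclidean norm, hence in product norm). The Dirichlet-unit normalization then still confines $\bigl(\delta_j\log|(e_{n,j}',e_{n,j}'')|\bigr)_j$ to a compact set, since the sum lies in $[\log(1/M),\log c_F]$ and the unit lattice is cocompact in the trace-zero hyperplane. Second, your justification for bounding $f_n$ is slightly off: what controls $f_n$ is not the height bound on $f_n+\mu e_n$ but rather (a) the determinant identity $e_{n,j}'f_{n,j}''-e_{n,j}''f_{n,j}'=1$, which fixes the component of $(f_{n,j}',f_{n,j}'')$ transverse to $(e_{n,j}',e_{n,j}'')$ at size $\asymp|(e_{n,j}',e_{n,j}'')|^{-1}$, and (b) reduction of the $(e_{n,j}',e_{n,j}'')$-parallel components modulo the cocompact lattice $\sigma(\mathcal O)\subset\R^r\times\C^s$, which is exactly what subtracting $\mu e_n$ does. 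The height bound is instead what you need to guarantee the limit module is discrete and hence lies in $X$. Also, when you complete $e_n$ to a basis you must scale the complement by a unit so that each local determinant is exactly $1$ (not merely a unit); this is the same trick as multiplying a $\GL_2(\mathcal O)$ change of basis into $\SL_2$. With those repairs your argument goes through.
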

The idea of the proof is to embed $G$ as a $\Q$-group in $\SL_{2(r+2s)}(\R)$. This identification goes deeper namely that the points of the module are identified with the points of the lattice and the $\|\cdot \|$ function we considered above is just the Euclidean norm on $\R^{2(r+2s)}$. This gives that $\Gamma $ is a lattice in $G$ and moreover using Mahler's compactness criterion we obtain that $X_{<M}$ is pre-compact.
\section{Main Ingredients and the Proof of Theorem~\ref{thm:main}}
\label{sec:mainthm}
In this section we will state Lemma~\ref{lem:Q_{M,N}} and Proposition~\ref{prop:mainn} without proofs and show how they can be used to deduce Theorem~\ref{thm:main}. To make use of both Lemma~\ref{lem:Q_{M,N}} and Proposition~\ref{prop:mainn} we need the following
lemma which gives an upper bound for entropy in
terms of covers by Bowen balls. 

Define a (forward) Bowen $N$-ball (of radius $\eta$) to be the translate $xB_N$ for some $x\in X$ of $$B_N=\bigcap_{n=0}^{N-1}a^{n}B_{\eta}^{G} a^{-n}$$
where $\eta>0$ is fixed such that the $\log$ map from $B_{\eta}^G$ to the Lie algebra of $G$ is injective.
\begin{lem}\label{lem:entropy} Let $\mu$ be a $\T$-invariant ergodic probability measure on $X$. For any $N\geq
1$ and $\epsilon >0$ let $BC(N,\epsilon)$ be the minimal number of (forward)
Bowen $N$-balls needed to cover any particular subset of $X$ of measure
bigger than $\epsilon$. Then
$$h_{\mu}(\T)\leq \liminf_{N \to \infty}\frac{\log
BC(N,\epsilon)}{N}.$$
\end{lem}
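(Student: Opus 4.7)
The plan is to combine the Shannon--McMillan--Breiman theorem with a geometric packing estimate in the Lie algebra of $G$.

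First I would fix $\delta>0$ and build a finite measurable partition $\mathcal{P}$ of $X$ whose atoms have diameter less than $\eta/2$, possibly excepting a single ``cusp'' atom of $\mu$-mass at most $\epsilon/4$ (this is possible because $\mu(X_{\ge M})\to 0$ as $M\to\infty$ by Lemma~\ref{lem:mahler}), chosen so that $h_\mu(\T,\mathcal{P})>h_\mu(\T)-\delta$. Applying Shannon--McMillan--Breiman to the ergodic measure $\mu$ and then Egorov, I obtain an integer $N_0$ and a measurable set $A\subseteq X$ with $\mu(A)>1-\epsilon/2$ such that $\mu(\mathcal{P}^N(x))\leq e^{-N(h_\mu(\T)-2\delta)}$ for every $x\in A$ and every $N\geq N_0$.

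Next, given any $E\subseteq X$ with $\mu(E)>\epsilon$ covered by $k$ forward Bowen $N$-balls $y_jB_N$ ($1\leq j\leq k$), set $E'=E\cap A$, so that $\mu(E')>\epsilon/2$. The atoms of $\mathcal{P}^N$ meeting $E'$ are pairwise disjoint and each has $\mu$-mass at most $e^{-N(h_\mu(\T)-2\delta)}$, hence their number is at least $(\epsilon/2)\,e^{N(h_\mu(\T)-2\delta)}$. Since $\mathcal{P}$ has diameter less than $\eta/2$, each atom of $\mathcal{P}^N$ has Bowen $d_N$-diameter less than $\eta/2$; so any atom meeting $y_jB_N$ is contained in the enlarged Bowen ball $y_jB_N'$ of radius $3\eta/2$.

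The heart of the argument, and the main obstacle, is then a packing estimate: a Bowen $3\eta/2$-ball contains at most a constant $C=C(\dim G)$ pairwise disjoint subsets of $d_N$-diameter less than $\eta/2$, uniformly in $N$. Transporting everything through the (injective) $\log$-map on $B_\eta^G$ into the Lie algebra of $G$, a Bowen ball of radius $r$ becomes a ``box'' whose side length in each Ad$(a)$-eigendirection of eigenvalue $e^\lambda$ is essentially $r\,e^{N\min(\lambda,0)}$, and a set of $d_N$-diameter less than $\eta/2$ fits into such a box with $r=\eta/2$; the two boxes differ by a uniform factor of $3$ in each eigendirection, so a standard volume count in $\R^{\dim G}$ yields the bound. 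Combining with the counting above gives $k\cdot C \geq (\epsilon/2)\,e^{N(h_\mu(\T)-2\delta)}$, whence $\liminf_{N\to\infty}\log BC(N,\epsilon)/N \geq h_\mu(\T)-2\delta$; letting $\delta\to 0$ proves the lemma.
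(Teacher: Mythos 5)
Your strategy (fine partition plus a cusp atom, Shannon--McMillan--Breiman with Egorov, then a box-packing estimate in the Lie algebra) is a recognizable and essentially sound route to this kind of statement, and it parallels in spirit the paper's sketch (the trivial bound $H(\xi)\le\log|\xi|$ applied to a fine partition with thin boundary, following \cite[Lemma~B.2]{TO}). However, there is a genuine gap at the pivotal geometric step.

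You write that ``Since $\mathcal{P}$ has diameter less than $\eta/2$, each atom of $\mathcal{P}^N$ has Bowen $d_N$-diameter less than $\eta/2$.'' This is false as stated: by construction $\mathcal{P}$ contains a cusp atom $P_\infty$ whose diameter is not small (indeed it is unbounded), and an atom of $\mathcal{P}^N=\bigvee_{n=0}^{N-1}\T^{-n}\mathcal{P}$ of the form $P_{i_0}\cap\T^{-1}P_{i_1}\cap\cdots$ only has small $d_N$-diameter when none of the $P_{i_n}$ equals $P_\infty$. You cannot simply discard atoms whose orbit visits $P_\infty$: restricting to $\bigcap_{n<N}\T^{-n}(X\setminus P_\infty)$ loses measure $\approx N\mu(P_\infty)$, which is not small when $N\to\infty$. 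Because the packing estimate is applied precisely to these atoms, this breaks the inequality $kC\ge(\epsilon/2)e^{N(h_\mu(\T)-2\delta)}$.

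The repair requires accounting quantitatively for the cusp excursions. One standard fix is: with $\epsilon'=\mu(P_\infty)$ small, use Birkhoff/Egorov to further restrict to a set $A''\subset A$ of measure $>1-\epsilon/2$ on which the orbit of length $N$ visits $P_\infty$ at most $2\epsilon' N$ times (for $N$ large). For $x\in A''$, the atom $\mathcal{P}^N(x)$ is contained in a ``partial Bowen ball'' constrained only at the non-cusp times; in the Lie-algebra box picture this enlarges the box along the unstable eigendirections by at most a factor $e^{c\epsilon' N}$ (with $c$ depending only on $a$), so the packing constant becomes $C\,e^{c\epsilon' N}$ rather than $C$. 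One then obtains $\liminf_N\frac1N\log BC(N,\epsilon)\ge h_\mu(\T)-2\delta-c\epsilon'$, and letting $M\to\infty$ (so $\epsilon'\to 0$) and $\delta\to 0$ recovers the lemma. Without some argument of this type the proof is incomplete, since the cusp atom is exactly the feature that distinguishes the non-compact setting of this paper from the classical compact case.
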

To prove the lemma one roughly uses the trivial entropy bound, namely
$$H(\xi)\le \log |\xi|$$
where $|\xi|$ is the number of elements of the partition $\xi$ and the existence of fine partitions with thin boundary. The proof is left to the reader which is very similar to \cite[Lemma~B.2]{TO}. 

For $M,N\ge 1$ given we define a partition
$$Q_{M,N}:=\bigvee _{n=0}^{N-1}\T^{-n}\{X_{<M},X_{\ge M}\}.$$
\begin{lem}
\label{lem:Q_{M,N}}
The partition $Q_{M,N}$ has $\ll e^{O(\frac{\log \log M}{\log M})N}$ elements for any $M\ge e^{h_{\max}(\T)}$ and $N\in \N$ where implied constants do not depend on $M,N$.
\end{lem}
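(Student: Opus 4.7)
The plan is to bound the number of non-empty atoms of $Q_{M,N}$ by encoding each atom via an itinerary $w\in\{<,\geq\}^N$ (with $w_n=\,\geq\,$ iff $\T^n x\in X_{\geq M}$) and showing that realizable $w$ can have very few maximal $\geq$-runs. The analytic input is that for each primitive $v\in\Lambda$, the function $f_v(n):=-\log\|a^n v\|$ is concave in $n$ with $|f_v(n+1)-f_v(n)|\leq h_{\max}(\T)/2$. This follows from the expansion
\[
\log\|a^n v\|\;=\;\sum_{j=1}^{r+s}\delta_j\,\log\max\bigl(e^{na_j/2}|v'_j|,\;e^{-na_j/2}|v''_j|\bigr),
\]
which is a sum of convex, piecewise-affine functions of $n$ with slopes in $[-h_{\max}(\T)/2,\,h_{\max}(\T)/2]$.

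The first step is to decompose $S(w):=\{n\in[0,N-1]:w_n=\,\geq\,\}$ into its maximal consecutive intervals $J_1,\ldots,J_K$ and show that each $J_k$ is governed by a unique primitive $v_k\in\Lambda$ (up to units of $\mathcal O$), with the $v_k$ pairwise distinct. Within a single $J_k$: if $n,n+1\in J_k$, the primitive $v\in\Lambda$ with $\|a^n v\|^{-1}=\h(\T^n x)$ satisfies $\|a^n v\|<1/M$, and the slope bound gives $\|a^{n+1}v\|<e^{h_{\max}(\T)/2}/M\leq 1$ (using $M\geq e^{h_{\max}(\T)}$); Lemma~\ref{lem:shortvector} then forces $v$ to also be the short vector at time $n+1$. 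Across two distinct maximal intervals $J_k\neq J_{k'}$: if they shared the same $v$, concavity of $f_v$ would make the super-level set $\{n:f_v(n)\geq\log M\}$ a single interval containing both, contradicting their maximality and separation.

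Next, I enlarge each $J_k$ to $E_k:=\{n\in\mathbb{Z}:f_{v_k}(n)\geq 0\}$, which is again an interval by concavity. The slope bound combined with $f_{v_k}\geq\log M$ on $J_k$ yields $|E_k|\geq (4\log M)/h_{\max}(\T)$. The $E_k$'s are pairwise disjoint, since by Lemma~\ref{lem:shortvector} two distinct primitives cannot both have norm $<1$ at the same time. Hence at most two $E_k$'s can straddle the endpoints of $[0,N-1]$, while the rest lie entirely inside, giving
\[
K\;\leq\;K_{\max}\;:=\;\frac{N\,h_{\max}(\T)}{4\log M}+2.
\]
Purely combinatorially, the number of binary words of length $N$ with at most $K_{\max}$ maximal $\geq$-runs equals $\sum_{K\leq K_{\max}}\binom{N+1}{2K}\leq(K_{\max}+1)\binom{N+1}{2K_{\max}}$, and Stirling together with $K_{\max}\asymp N/\log M$ yields logarithm $O\bigl(N\log\log M/\log M\bigr)$, which is the claimed bound.

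The technical subtlety I anticipate requiring the most care is the step that distinct $J_k$ force distinct primitive $v_k$: it combines the ``super-level set of a concave function is an interval'' observation with the uniqueness statement of Lemma~\ref{lem:shortvector}, and one must check that the primitive chosen at each time in $J_k$ genuinely extends to a single element of $\Lambda$ whose entire above-$1$ excursion interval $E_k$ is well-defined. Boundary effects for $E_k$'s straddling the endpoints of $[0,N-1]$ are then routine.
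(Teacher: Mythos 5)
Your proposal is correct in its essentials, but the bookkeeping is organized differently from the paper's. The paper divides $[0,N-1]$ into $\asymp N h_{\max}(\T)/\log M$ blocks each of length $\asymp \log M/h_{\max}(\T)$, argues (via the time $\ge \frac{2\log M}{h_{\max}(\T)}$ needed to descend from norm $>1$ to norm $\le 1/M$ or to ascend back, plus eventual monotonicity of $\|\T^n v\|$) that each such block admits at most one excursion interval, hence $\ll \log^2 M$ patterns per block, and multiplies over blocks. You instead bound the \emph{total} number of maximal $\ge$-runs $K$ directly, by assigning to each run a primitive vector $v_k$, extending the run to the full above-norm-one interval $E_k$ of $v_k$, using concavity of $f_{v_k}$ to get $|E_k|\gtrsim \log M/h_{\max}(\T)$ and Lemma~\ref{lem:shortvector} to get the $E_k$ essentially disjoint, and then counting binary words with $\le K_{\max}$ runs. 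Both routes use exactly the same analytic inputs (the slope bound $h_{\max}(\T)/2$ and Lemma~\ref{lem:shortvector}) and land on the same asymptotic $e^{O(\frac{\log\log M}{\log M})N}$; your version is arguably cleaner since it avoids the slightly awkward alignment of excursions with block boundaries, and it anticipates the maximal-interval decomposition $\mathcal V = V_1^{\mathcal V}\cup\cdots\cup V_k^{\mathcal V}$ that the paper introduces only later in \S\ref{sec:partitions}. Two small points to tighten: Lemma~\ref{lem:shortvector} gives uniqueness for norm $<1$, not $\le 1$, so you should define $E_k:=\{n: f_{v_k}(n)>0\}$ (which still has length $\ge \frac{4\log M}{h_{\max}(\T)}-O(1)$) to make the $E_k$ genuinely pairwise disjoint; and the inequality $\sum_{K\le K_{\max}}\binom{N+1}{2K}\le (K_{\max}+1)\binom{N+1}{2K_{\max}}$ needs $2K_{\max}\le (N+1)/2$, which is only marginal when $\log M$ is near $h_{\max}(\T)$ -- in that range one can simply use the trivial bound $2^N$, since the claimed estimate is then weaker anyway.
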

Here $X \ll Z  $ means that there exists a positive constant $c$ such that $X \le c Z.$ Also, $X \ll_d Z  $ means that the constant $c$ depends on $d$. The proof of Lemma~\ref{lem:Q_{M,N}} is given in \S~\ref{sec:proofoflemma}.

The partition elements of $Q_{M,N}$ can be described by the suitable subsets of $[0,N-1]$ in the sense that for any $Q\in Q_{M,N}$ there exists $\mathcal V \subset [0,N-1]$ with integer end points such that 
$$Q=\{ x\in X :  \forall n \in [0,N-1], \T^n(x) \in X_{\ge M} \text{ if and only if } n \in \mathcal V\}.$$ 
In this case, we denote $Q$ by $Q(\mathcal V).$
\begin{prop}
\label{prop:mainn} 
For any $M> e^{3h_{\max}(\T)}$ the partition element $Q(\mathcal V) \in Q_{M,N}$ with $Q(\mathcal V) \subset X_{<M}$ can be covered by 
$$\ll_M e^{O(\frac{\log \log M}{\log M})N} e^{h_{\max}(\T)(N-\frac{|\mathcal V|}{2})}$$
 Bowen $N$-balls for any $N \in \N$ where the implied constant in $O(\cdot)$ is independent of $M,N$.
\end{prop}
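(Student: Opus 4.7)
My plan is to build the Bowen $N$-ball cover of $Q(\mathcal V)$ inductively over $n=0,1,\dots,N-1$, refining each partial Bowen $n$-ball along the expanding direction $U^+$, with the refinement cost at each cusp-visit step $n\in\mathcal V$ reduced by a factor of $e^{h_{\max}(\T)/2}$ compared with the generic $e^{h_{\max}(\T)}$ cost at non-cusp steps.

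\textbf{Inductive refinement.} Since $X_{<M}$ is pre-compact by Lemma~\ref{lem:mahler}, the starting set $Q(\mathcal V)\subset X_{<M}$ is covered by $\ll_M 1$ small boxes of the form $xB_\eta^{U^-}B_\eta^L B_\eta^{U^+}$. At step $n\notin\mathcal V$ the image $\T^n(x)$ lies in $X_{<M}$, and refining a partial Bowen $n$-ball into partial Bowen $(n+1)$-balls costs the standard factor $e^{h_{\max}(\T)}$, since $\operatorname{Ad}(a)|_{U^+}$ has log-determinant $h_{\max}(\T)$. At step $n\in\mathcal V$ the image $\T^n(x)\in X_{\ge M}$ carries, by Lemma~\ref{lem:shortvector}, a unique (up to units) primitive short vector $v=(v_j',v_j'')_j$ with $\|v\|<1/M$. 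I would sub-partition the partial ball by the discrete height bin $\lfloor\log_M\h(\T^n\cdot)\rfloor$ and by the direction class of $v$, and show that within each such sub-piece only $e^{h_{\max}(\T)/2}$ sub-balls suffice.

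\textbf{The $1/2$ gain.} Under the action of $a$, the coordinates of $v$ transform as $v_j'\mapsto e^{i\theta_j+a_j/2}v_j'$ and $v_j''\mapsto e^{-i\theta_j-a_j/2}v_j''$, so the norm $\|v\|$ can contract at a rate of at most $h_{\max}(\T)/2$ per step---exactly half the full unstable log-expansion. For $v$ to remain the unique primitive short vector across cusp steps in $\mathcal V$, it must be aligned (in each place) with the contracting coordinate; together with Lemma~\ref{lem:shortvector}, which excludes alternative short vectors, this alignment confines the admissible points $y\in xB_n\cap Q(\mathcal V)$ to a sub-slice of $U^+$ of Haar measure at most $e^{-h_{\max}(\T)/2}\cdot\operatorname{vol}(B_n\cap U^+)$. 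Heuristically, the rank-one $\mathcal O$-submodule $\mathcal O\cdot v\subset\Lambda=\T^n(x)$ absorbs half of the $U^+$ budget and only the rank-one quotient $\Lambda/(\mathcal O\cdot v)$ contributes freely; hence the refinement cost at step $n\in\mathcal V$ drops from $e^{h_{\max}(\T)}$ to $e^{h_{\max}(\T)/2}$.

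\textbf{Multiplication and main obstacle.} Multiplying the refinement factors yields $e^{h_{\max}(\T)(N-|\mathcal V|)}\cdot e^{h_{\max}(\T)|\mathcal V|/2}=e^{h_{\max}(\T)(N-|\mathcal V|/2)}$. The height/direction sub-partitioning introduces at most $O((\log M)^{|\mathcal V|})\le e^{O(\log\log M/\log M)N}$ extra pieces, by the same combinatorial estimate as in Lemma~\ref{lem:Q_{M,N}}; the initial box cover supplies the $\ll_M 1$ prefactor. The main obstacle is making the $1/2$ gain rigorous: one must use the uniqueness of the primitive short vector (Lemma~\ref{lem:shortvector}) together with the explicit $\mathcal O$-module structure on $\Lambda\subset(\R^2)^r\times(\C^2)^s$ to identify the exact sub-slice of $U^+$ compatible with the cusp-visit pattern $\mathcal V$ and to bound its $U^+$-volume by $e^{-h_{\max}(\T)/2}\cdot\operatorname{vol}(B_n\cap U^+)$. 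The hypothesis $M>e^{3h_{\max}(\T)}$ enters precisely to ensure the discrete height bins are coarse relative to the dynamical scales, so that the height-based sub-partitioning is compatible with the Bowen-ball refinement and the arithmetic short-vector estimate has enough room to operate.
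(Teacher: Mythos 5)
Your high-level strategy resembles the paper's (inductive refinement of Bowen balls over $[0,N-1]$, exploiting Lemma~\ref{lem:shortvector} and a direction-class sub-partition), and the heuristic for the $1/2$ gain --- the rank-one $\mathcal O$-submodule $\mathcal O v$ absorbing half of the $U^+$ budget --- is the right intuition. But there are two genuine gaps, and they are linked.

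First, the per-step gain claim is not correct. You assert that at each step $n\in\mathcal V$ the refinement costs only $e^{h_{\max}(\T)/2}$, but the gain from the short-vector constraint is not uniform step-by-step: it is an \emph{aggregate} gain over an entire cusp excursion $V_m^{\mathcal V}=[b,b+\ell]$. The constraint on the admissible unstable perturbation ${\bf t}$, namely $|t_j|\ll e^{b-n_j}$ where $n_j$ is determined by the direction bin of the short vector (Lemma~\ref{lem:res}), is a single constraint fixed at the start of the excursion. Early in the excursion this constraint may already be far tighter than the Bowen-ball radius (so that step's refinement is free); late in the excursion the Bowen ball is tighter and the step costs the full $e^{h_{\max}(\T)}$. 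What the paper actually proves (Lemma~\ref{lem:vol}, combined with the explicit inequality relating the $n_j$'s to $\ell$ from Lemma~\ref{lem:relation}) is that over the $\ell$ refinement steps spanning the excursion, the total cost is $\ll e^{h_{\max}(\T)\ell/2}$, not that each step costs $e^{h_{\max}(\T)/2}$. Your argument never identifies or uses an analogue of Lemma~\ref{lem:relation}, which is the arithmetic input ($\|\T^{\ell+1}v\|/\|v\|<1$ implying the inequality among the bins $J_j$) that actually produces the $1/2$.

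Second, your accounting for the sub-partitioning cost is off. You sub-partition by height bin and direction class at every step $n\in\mathcal V$, and claim the resulting $O((\log M)^{|\mathcal V|})$ extra pieces are $\le e^{O(\log\log M/\log M)N}$. But $(\log M)^{|\mathcal V|}=e^{|\mathcal V|\log\log M}$, and $|\mathcal V|$ can be comparable to $N$, so this is $e^{\Theta(N\log\log M)}$, which overwhelms the claimed bound by a factor of $\log M$ in the exponent. The paper avoids this by refining $Q_{M,N}$ into $P_{M,N}$ via a sub-partition fixed \emph{once per excursion}, not per step; since the number of excursions $k$ is $\ll N/\log M$ (by the gap argument in the proof of Lemma~\ref{lem:Q_{M,N}}) and each contributes a factor polynomial in its length, the product is controlled by the $(N/k)^k$ estimate in Lemma~\ref{lem:P_{M,N}}, yielding the needed $e^{O(\log\log M/\log M)N}$. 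The per-excursion structure of the sub-partition is therefore not a stylistic choice; it is forced by the counting. To repair your proof you would need to (a) fix the direction bins only once per maximal interval $V_m^{\mathcal V}$, and (b) prove an aggregate-over-excursion volume bound in $U^+$, which is precisely the content of Lemmas~\ref{lem:res}, \ref{lem:relation}, and \ref{lem:vol}.
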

The proof of Proposition~\ref{prop:mainn} easily follows from Proposition~\ref{prop:main} together with Lemma~\ref{lem:P_{M,N}} and it is given after the statement of Proposition~\ref{prop:main}.
\begin{proof}[Proof of the Theorem~\ref{thm:main}] Note first that it suffices to
consider ergodic measures. For if $\mu$ is not ergodic, we can
write $\mu$ as an integral of its ergodic components $\mu=\int
\mu_t d\tau(t)$ for some probability space $(E,\tau)$, see for example \cite[Theorem 6.2]{EinWar}. Therefore,
we have $\mu(X_{\ge M})=\int \mu_t(X_{\ge M})d\tau(t)$, but also
$h_{\mu}(\T)=\int h_{\mu_t}(\T) d\tau(t)$, see for example \cite[Thm.~8.4]{WB}, so that
desired estimate follows from the ergodic case.

Suppose that $\mu$ is ergodic.  Let $M>e^{3h_{\max}(\T)}$ be such that $\mu(X_{<M})>0$. Later in the proof we will show how one may choose $M$ independent of $\mu$ which is crucial in obtaining the last part of the theorem. We would like to apply
Lemma~\ref{lem:entropy}. For this we need to find an upper bound for
covering a subset of $X$ of measure $\epsilon$ by Bowen $N$-balls. Let us fix $\epsilon>0$ such that $\mu(X_{<M})>2\epsilon.$  The pointwise ergodic theorem implies
\[\frac{1}{N}\sum_{n=0}^{N-1}1_{X\geq M}(\T^n(x))\to \mu(X_{\geq M})\]
as $N \to \infty$ for a.e. $x \in X$. Thus, there is $N_0$ such that for $N>N_0$ the average on the
left will be bigger than $\mu(X_{\geq M})-\epsilon$ for any $x \in
X_1$ for some $X_1 \subset X$ with measure $\mu(X_1)> 1-\epsilon$.
Clearly, for any $N>N_0$ we have $\mu(Z)>\epsilon$ where
\[Z=X_1\cap X_{<M}.\]
Now, we would like to find an upper bound for the number of Bowen
$N$-balls needed to cover the set $Z$. Here $N \rightarrow \infty$
while $\epsilon$ is fixed. We now split $Z$ into the sets
$P(\mathcal V)$ as in Proposition~\ref{prop:mainn}. By
Lemma~\ref{lem:Q_{M,N}} we know that we need $\ll_M e^{O(\frac{\log \log M}{\log M})N}$ many of these. Moreover, by our assumption on $X_1$ we only need to look at sets $\mathcal V \subset
[0,N-1]$ with $|\mathcal V| \geq (\mu(X_{\geq M })-\epsilon) N$.
On the other hand, Proposition~\ref{prop:mainn}
gives that each of those sets $Q(\mathcal V )$ can be covered by
$\ll_M  e^{O(\frac{\log \log M}{\log M})N}e^{h_{\max}(\T)(N-\frac{1}{2}|\mathcal V|)}$ Bowen $N$-balls. 
Together we see that $Z$ can be covered by
 $$\ll_{M} e^{O(\frac{\log \log M}{\log M})N} e^{h_{\max}(\T)(N-\frac{1}{2}|\mathcal V|)}$$  Bowen
 $N$-balls. Applying Lemma~\ref{lem:entropy} we arrive at
 \begin{eqnarray*}
  h_{\mu}(\T)&\leq
&\liminf_{N \to
\infty}\frac{\log BC(N,\epsilon)}{N} \\
&\le& h_{\max}(\T)\left(1-\frac{(\mu(X_{\geq M })-\epsilon)}{2}\right)+O\left(\frac{\log  \log M}{\log M }\right).
\end{eqnarray*}
Since $\epsilon>0$ was arbitrary, we get that
\begin{equation}\label{eqn:mainthm}
h_{\mu}(\T)\leq h_{\max}(\T)\left(1-\frac{\mu(X_{\geq M })}{2}\right)+O\left(\frac{\log  \log M}{\log M }\right)
\end{equation}
which can be rewritten as
\begin{equation*}
\mu(X_{< M}) \ge 1-\frac{2}{h_{\max}(\T)} (h_{\max}(T)-h_{\mu}(T))+\phi(M)
\end{equation*}
where $\phi(M)=O\left(\frac{\log  \log M}{\log M }\right)$.

In the next we will show that the theorem holds for any $M>M_0:=\max\{e^{3h_{\max}(\T)},100\}$. Clearly the theorem holds if $\mu(X_{<M_0})>0$ so that we may assume $\mu(X_{<M_0})=0.$ Let us define the number $M_\mu$ by
$$M_\mu:=\inf\{M> M_0: \mu(X_{<M})>0\}.$$
The above argument implies that \eqref{eqn:mainthm} holds for any $M>M_\mu$. If $\mu(X_{<M_\mu})>0$ then \eqref{eqn:mainthm} also holds for $M=M_\mu.$ Otherwise if $\mu(X_{<M_\mu})=0$ then 
$$\lim_{n\to \infty}\mu(X_{\ge M_\mu+\frac{1}{n}})=\mu(X_{> M_\mu})=\mu(X_{\ge M_\mu})=1.$$
Now, using \eqref{eqn:mainthm} for $M+1/n$ instead of $M$ and taking the limit as $n \to \infty$ we get \eqref{eqn:mainthm} for $M=M_\mu.$ 
For any $M \in [M_0, M_\mu)$ we need to prove that \eqref{eqn:mainthm} holds. Since $\mu(X_{\ge M})=1$ for $M \le M_\mu$ we see that \eqref{eqn:mainthm} simplifies to
$$h_{\mu}(\T)\leq \frac{h_{\max}(\T)}{2}+O\left(\frac{\log  \log M}{\log M }\right).$$
Since $\frac{\log  \log M}{\log M }$ is decreasing for $M\ge 100$ and since the above equation holds for $M=M_\mu$ it clearly holds for any $M \in [M_0, M_\mu).$

For any $M> \max\{e^{3h_{\max}(\T)},100\}$, one can approximate the characteristic function of $X_{<M}$ by continuous functions with compact support and use \eqref{eqn:mainthm} to obtain the last part of the theorem.
\end{proof}

\section{Partitions}\label{sec:partitions}
For given $M,N\ge 1$ we recall the partition $Q_{M,N}:=\bigvee _{n=0}^{N-1}\T^{-n}\{X_{<M},X_{\ge M}\}.$
In this section we estimate the upper bound for the cardinality of $Q_{M,N}$ to prove Lemma~\ref{lem:Q_{M,N}}. Later we consider the refinement $P_{M,N}$ of the original partition $Q_{M,N}$ which is crucial in obtaining Proposition~\ref{prop:main}.

From now on, for simplicity, we assume that $a_j \ge 0$ for any $j \in [1,r+s].$ This in particular implies that the unstable subgroup $U^+$ is a subgroup of {\it lower} unipotent matrices in $G$. Also, with this assumption a component vector $(v_j',v_j'')$ under the iterations of $\T$ is getting short, that is $|(v_j',v_j'')| >|(v_j' e^{a_j/2} ,v_j'' e^{-a_j/2}) | ,$ means that $|v_j'| > |v_j''| e^{a_j/2}$ as otherwise if $a_j<0$ then we would get $|v_j''| > |v_j'| e^{a_j/2}$. Hence, the assumption $a_j \ge 0$ is simply a matter of ordering the coordinates of component vectors. 
\subsection{Proof of Lemma~\ref{lem:Q_{M,N}}}\label{sec:proofoflemma}
For any $x$, the partition element of $Q_{M,N}$ containing $x$ describes the time moments in $[0,N-1]$ for which $x$ stays above height $M$ (and hence when it is below height $M$) under the action of $\T$. So, we need to calculate the possible configurations of times in $[0,N-1]$. Our main tool to calculate the upper bound for the possible configurations is Lemma~\ref{lem:shortvector}. If there is a time when a point $x$ (under the action of $\T$) is above height $M$ then there is a considerable gap until the next time (if any) when $x$ reaches height $M$ again. This is because the vectors in $x$ can get short (under the action of $\T$) at most once and for another vector in $x$  to become short the earlier vector has to become of norm 1 at least. Now, we explicate the above discussion.
Assume that for a vector $v \in (\R^2)^r\times(\C^2)^s$ we have $\|v\|=\prod_{j=1}^{r+s}|(v_j',v_j'')|^{\delta_j}>1.$ We would like to know an estimate for the smallest possible time $n$ for which the vector $v$ reaches the norm $\le 1/M$ under the action of $\T$. It is easy to see that the best possible $n$ occurs for example when $v_j'=0$ for $j=1,...,r+s$. In this case, at time $n$ we must have 
$$\prod_{j=1}^{r+s}|(0,v_j''e^{-(i\theta_j+a_j/2)n})|^{\delta_j}\le\frac{1}{M}.$$
Since $\|v\|> 1$, we must have
$e^{-\frac{n}{2}\sum_{j=1}^{r+s} a_j \delta_j}<  1/M \text{ which gives } $
$$n \ge \frac{2\log M}{h_r+2h_s}.$$ 
Similarly, for a vector of norm at most $1/M$, under the action of $\T$,  the smallest possible time moment when the norm becomes greater than $1$ is again $\ge \frac{2\log M}{h_r+2h_s}$. We also note that for any vector $v$ in $x$ if the sequence $(\|\T^n(v)\|)_{n \ge 0}$ gets increased at some time then it becomes monotone increasing from that time moment. Thus, in a time interval of length $2\lfloor \frac{2\log M}{h_r+2h_s}\rfloor$, for any point $x$ in $X$ there can be at most one time interval on which $x$ stays above height $M$. Hence, $Q_{M,\lfloor\frac{2\log M}{h_r+2h_s}\rfloor}$ has at most $\left(\begin{array}{c}2\lfloor\frac{2\log M}{h_r+2h_s}\rfloor\\2\end{array}\right)\ll \log^2 M$ many elements. On the other hand, to obtain $Q_{M,N}$ we need to take refinements of $\lfloor \frac{N}{2\lfloor\frac{2\log M}{h_r+2h_s}\rfloor-1}\rfloor$ many pre-images of $Q_{M,\lfloor\frac{2\log M}{h_r+2h_s}\rfloor}$ and at most $2\lfloor\frac{2\log M}{h_r+2h_s}\rfloor-1$ many of $\{X_{<M},X_{\ge M}\}.$ For $M \ge e^{h_r+2h_s}$ we have 
$$\left\lfloor \frac{N}{2\lfloor\frac{2\log M}{h_r+2h_s}\rfloor-1}\right\rfloor<\frac{N}{\frac{4\log M}{h_r+2h_s}-3}\le\frac{N(h_r+2h_s)}{\log M}$$
Hence, we obtain that the cardinality of $Q_{M,N}$ is
$$\ll (\log^2 M)^{\frac{N(h_r+2h_s)}{\log M}}\le e^{\frac{2(h_r+2h_s)\log \log M}{\log M}N}.$$

\qed
\subsection{The refined partition $P_{M,N}$} We now consider the refinement $P_{M,N}$ of $Q_{M,N}$. It is a bit technical and the reason why this refinement is needed comes from the product structure of the space $G$ and in particular the way we define the height function $\h(\cdot).$ The partition elements of $Q_{M,N}$ give information as when the trajectory of a point under $\T$ goes into the cusp and when it comes back. Due to the way the height $\h(\cdot)$ is defined this does not provide much information on individual components $(v_j',v_j'')$ of the short vectors $v \in (\R^2)^r \times (\C^2)^s$ even if we know that $\|\T^n v\|$ is decreasing on some time interval in $[0,N-1]$. Thus, what we really need is a partitioning of the space $X$ which describes whether components of short vectors under iterates of $\T$ decreases or increases. On the other hand, if a component vector gets shorter in $n$ iterates under $\T$, that is, if $|\T^n(v_j',v_j'')|=|(v_j'e^{n a_j/2},v_j'' e^{-n a_j/2})|\le|(v_j',v_j'')|$, then it is easy to see that we must have $|v_j'| e^{n a_j/2}\le |v_j''|$. This simple observation hints the importance of knowing the ratios $\frac{|v_j'|}{|v_j''|}$ of component vectors. Thus, elements of our new partition $P_{M,N}$ should describe these ratios (cf. \eqref{eqn:Q(J_m)}) of component vectors of short vectors as we define now.

Our goal is to refine the partition $Q_{M,N}$ further by partitioning most of its elements. Let $Q$ be one of its elements. Then there exists $\mathcal{V} \subset [0, N-1]$ such that 
\begin{multline}
\label{eqn:Q(V)}
Q:=Q(\mathcal V)\\
=\{x \in X : \text{ for all } n \in [0,N-1], \T^n(x)\in X_{\ge M} \text{ if and only if } n \in \mathcal V\}.
\end{multline}
We split $\mathcal V$ into maximal intervals $V^{\mathcal V}_1,\dots, V^{\mathcal  V}_k$ for some $k\in \N$. For $m=1,2,\dots,k$ we write $V^{\mathcal V}_m=[b^{\mathcal  V}_m,b^{\mathcal  V}_m+\ell^{\mathcal V}_m]$. 

 For any $j \in [1,r+s]$ recall the fixed number $a_j$ appeared in the definition of $\T$. For each $j \in [1, r+s]$ and $m \in\{1,2,\dots,k\}$ let us decompose the extended reals into the following $\ell_m^{\mathcal V}+2$ subintervals: 
\begin{align}
\label{eqn:I^{(m,j)}}
I_{0,j}(V^{\mathcal V}_m)&=[-\infty,b_m^{\mathcal V}],\,I_{\ell_m^{\mathcal V}+1,j}(V^{\mathcal V}_m)=(b_m^{\mathcal V}+\ell_m^{\mathcal V} a_j ,\infty],\\
 I_{n,j}(V^{\mathcal V}_m)&=(b_m^{\mathcal V}+(n-1)a_j,b_m^{\mathcal V}+na_j] \text{ for } n\in [1,\ell_m^{\mathcal V}].
\end{align}
We write 
$$\mathcal{I}_j(V^{\mathcal V}_m)=\{I_{n,j}(V^{\mathcal V}_m): n\in [0,\ell_m^{\mathcal V}+1]\} \text{ for } m\in\{1,2,\dots,k\} \text{ and } j\in [1,r+s].$$ 
We first note that for any $x \in Q$ there exists a unique primitive vector $v \in \T^{b_m^{\mathcal V}-1}(x)$ such that 
\begin{equation}
\label{eqn:vm}
\|\T^n(v)\|\le \frac{1}{M} \text{ for } n \in [1,\ell_m^{\mathcal V}+1].
\end{equation}
We fix $m \in \{1,2,\dots, k\}$ and for each $j \in [1,r+s]$ we pick one interval $J_j(V_m^{\mathcal V})$ from the set $\mathcal{I}_j(V^{\mathcal V}_m)$ and consider the product set
\begin{equation}
\label{eqn:J_m}
J(V_m^{\mathcal V})=J_1(V_m^{\mathcal V})\times \cdots \times J_{r+s}(V_m^{\mathcal V}) .
\end{equation}
 Now, for any such product set $J(V_m^{\mathcal V})$ we associate a partition element, which could be empty, in $Q$ given by
\begin{multline}
\label{eqn:Q(J_m)}
Q(J(V_m^{\mathcal V}) ):=\{x \in Q : \exists v\in\T^{b_m^{\mathcal V}-1}(x) \text{ such that } \eqref{eqn:vm} \text{ holds and }\\
  |v_j''|=|v_j'| e^{s_j} \text{ for some } s_j \in J_j(V_m^{\mathcal V})-b_m^{\mathcal V}\}.
\end{multline}
For any $m\in \{1,2,\dots,k\}$ we fix one partition element $Q(J(V_m^{\mathcal V}))$ as in \eqref{eqn:Q(J_m)} and define the following further refined partition element 
\begin{equation}
\label{eqn:P(V)}
P(\mathcal V)=\bigcap_{m=1}^{k} Q(J(V_m^{\mathcal V})).
\end{equation}
In this way, for any choice of $Q \in Q_{M,N}$ and any choice of $J(V_m^{\mathcal V})$ as in \eqref{eqn:J_m} we obtain one partition element which is contained in $Q$. The collection of all possible $P(\mathcal V)$ as in \eqref{eqn:P(V)} gives a refined partition $P_{M,N}$ of $Q_{M,N}.$ 

For further motivation why the partition $P_{M,N}$ is crucial we refer to \S~\ref{sec:res}, in particular see Lemma~\ref{lem:res}. 
\begin{lem}
\label{lem:P_{M,N}}
For $M > e^{3h_{\max}(\T)}$ and $N \in \N$ the cardinality of the partition $P_{M,N}$ constructed above is $\ll e^{O(\frac{\log \log M}{\log M})N}$ where the implied constants are independent of $M,N.$
\end{lem}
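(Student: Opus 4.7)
The plan is to bound $|P_{M,N}|$ by $|Q_{M,N}|$ times the maximum, over $\mathcal V\subset[0,N-1]$, of the number of refinements of $Q(\mathcal V)$; since Lemma~\ref{lem:Q_{M,N}} already controls $|Q_{M,N}|$ by a quantity of the desired form, the task reduces to bounding the refinement factor by the same quantity. Writing $V_1^{\mathcal V},\dots,V_k^{\mathcal V}$ for the maximal subintervals of $\mathcal V$ with lengths $\ell_1^{\mathcal V},\dots,\ell_k^{\mathcal V}$, the construction \eqref{eqn:I^{(m,j)}}--\eqref{eqn:P(V)} shows that this refinement factor equals
$$R(\mathcal V)\;=\;\prod_{m=1}^{k}(\ell_m^{\mathcal V}+2)^{r+s},$$
since for each $m$ we choose a product set $J(V_m^{\mathcal V})=J_1\times\cdots\times J_{r+s}$ out of $\prod_j |\mathcal I_j(V_m^{\mathcal V})|=(\ell_m^{\mathcal V}+2)^{r+s}$ possibilities.

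First I would apply AM--GM to the factors $\ell_m^{\mathcal V}+2$: using $\sum_m \ell_m^{\mathcal V}\le N$ one obtains $R(\mathcal V)\le \bigl((N+2k)/k\bigr)^{k(r+s)}$. Next I would bound $k$ by reusing the short-vector analysis from \S\ref{sec:proofoflemma}: Lemma~\ref{lem:shortvector} together with the minimal transit-time estimate $\tfrac{2\log M}{h_r+2h_s}=\tfrac{2\log M}{h_{\max}(\T)}$ derived there forces a gap of at least that many steps between any two consecutive maximal intervals of $\mathcal V$, since the current short vector must first return to norm $\ge 1$ before another primitive vector can drop below $1/M$. This yields $k\le Nh_{\max}(\T)/(2\log M)+1$. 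Then I would verify by a one-line derivative computation that $t\mapsto t\log(N/t+2)$ is strictly increasing for $t>0$ (its derivative $\log(N/t+2)-\tfrac{N/t}{N/t+2}$ is bounded below by $\log 2>0$), so the AM--GM bound is maximized at $t=k_{\max}$. Finally, substituting $k_{\max}\asymp Nh_{\max}(\T)/\log M$, and using the hypothesis $M>e^{3h_{\max}(\T)}$ to ensure $N/k_{\max}=2\log M/h_{\max}(\T)>6$, gives
$$\log R(\mathcal V)\;\ll\;(r+s)\,\frac{Nh_{\max}(\T)}{\log M}\,\log\!\left(\frac{\log M}{h_{\max}(\T)}\right)\;\ll\;\frac{N\log\log M}{\log M},$$
with implied constants depending only on $r+s$ and $h_{\max}(\T)$. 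Combining this with Lemma~\ref{lem:Q_{M,N}} delivers the claim.

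The main obstacle will be pinning down the bound on $k$: everything else collapses to a single AM--GM inequality together with an elementary monotonicity check. Extracting $k\le Nh_{\max}(\T)/(2\log M)+O(1)$ requires isolating from \S\ref{sec:proofoflemma} the clean quantitative statement that in any window of length $\tfrac{2\log M}{h_{\max}(\T)}$ the trajectory can change regime between $X_{<M}$ and $X_{\ge M}$ at most a bounded number of times, which in turn rests on Lemma~\ref{lem:shortvector}. If this separation constant were weakened by as little as a multiplicative $\log M$ factor, the final exponent would no longer be $O((\log\log M)/\log M)$. By contrast, the product structure $J(V_m^{\mathcal V})=J_1\times\cdots\times J_{r+s}$ contributes only the harmless fixed multiplier $r+s$ to the exponent, so it poses no real difficulty.
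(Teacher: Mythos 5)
Your proposal is correct and mirrors the paper's own argument: you bound the refinement factor by $\prod_m(\ell_m^{\mathcal V}+2)^{r+s}$, apply AM--GM together with a monotonicity check, use the transit-time estimate from the proof of Lemma~\ref{lem:Q_{M,N}} (which rests on Lemma~\ref{lem:shortvector}) to bound $k \ll Nh_{\max}(\T)/\log M$, and conclude by combining with Lemma~\ref{lem:Q_{M,N}}. The only cosmetic difference is that you keep the $+2$ inside the AM--GM (bounding $(N/k+2)^{k(r+s)}$ and checking monotonicity of $t\mapsto t\log(N/t+2)$), whereas the paper drops to $(N/k)^{k(r+s)}$ and works with $f(x)=(N/x)^x$; your variant is actually slightly cleaner since the paper's passage from $\prod_m(\ell_m+2)$ to $\prod_m\ell_m$ is silently absorbing a $k$-dependent constant, which your version avoids.
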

\begin{proof}
Consider a partition element $Q(\mathcal V)$ of $Q_{M,N}$ as in \eqref{eqn:Q(V)}.
Let $Q(J(V_m^{\mathcal V}))$ be as in \eqref{eqn:Q(J_m)} and $P(\mathcal V)$ be as in \eqref{eqn:P(V)}. There are at most $(|V_m^{\mathcal V}|+2)^{r+s}$ possible ways to choose $J(V_m^{\mathcal V})$ and hence $(|V_m^{\mathcal V}|+2)^{r+s}$ possible ways to choose $Q(J(V_m^{\mathcal V}))$ for a fixed $m \in [1,k]$. Thus, the number of partition elements of $P_{M,N}$ contained in $Q(\mathcal V)$ is \begin{multline*}
(|V_1^{\mathcal V}|+2)^{r+s}(|V_2^{\mathcal V}|+2)^{r+s}\cdots (|V_k^{\mathcal V}|+2)^{r+s}\\
=\exp\left((r+s)[\log (|V_1^{\mathcal V}|+2)+\log (|V_2^{\mathcal V}|+2)+\cdots+\log (|V_k^{\mathcal V}|+2)]\right).
\end{multline*}
This is $$\ll \exp ((r+s)\log (|V_1^{\mathcal V}| |V_2^{\mathcal V}|...|V_k^{\mathcal V}|)).$$
We have $$|V_1^{\mathcal V}| |V_2^{\mathcal V}|...|V_k^{\mathcal V}|\le \left(\frac{|V_1^{\mathcal V}| + |V_2^{\mathcal V}|+...+|V_k^{\mathcal V}|}{k}\right)^k\le\left(\frac{N}{k}\right)^k.$$
Also, note that for the function $f(x)=(\frac{N}{x})^x=(N)^x e^{-x\log
 x}$ its derivative 
 \begin{align*}
 f'(x)&=(N)^x \log(N) e^{-x\log x}+(N)^x
 e^{-x\log x}(-\log x -1)\\
 &=(N)^x e^{-x\log x}(\log (N)-\log x-1) .
 \end{align*}
  Hence
 $f(x)=(\frac{N}{x})^x$ is increasing on $[1,\frac{N}{e}]$. On the other hand, from the proof of Lemma~\ref{lem:Q_{M,N}} we know that 
 $$k \le \left \lceil\frac{N}{2\lfloor \frac{2\log M}{h_r+2h_s}\rfloor}\right \rceil \le \left\lceil\frac{(h_r+2h_s)N}{2\log M}\right\rceil \le \max\left\{1, \frac{(h_r+2h_s)N}{\log M}\right\}.$$
 If $k=1$ then $(\frac{N}{k})^k=N$. Otherwise, $k \le \frac{(h_r+2h_s)N}{\log M}$ and for $M \ge e^{e(h_r+2h_s)}$ we have
 $$\left(\frac{N}{k}\right)^k\le\left(\frac{N}{\frac{(h_r+2h_s)N}{\log M} }\right)^\frac{(h_r+2h_s)N}{\log M}=\left(\frac{\log M}{h_r+2h_s}\right)^\frac{(h_r+2h_s)N}{\log M}. $$
 Hence, the number of partition elements of $P_{M,N}$ contained in $Q(\mathcal V)$ is $\ll e^{(r+s)\log (N)}$ if $k=1$ and otherwise it is
 $$\ll \exp\left((r+s)\log \left(\left(\frac{\log M}{r+s}\right)^\frac{(h_r+2h_s)N}{\log M}\right)\right)\ll e^{O(\frac{\log \log M}{\log M})N}. $$
 In either case, the number of partition elements of $P_{M,N}$ contained in $Q(\mathcal V)$ is $\ll e^{O(\frac{\log \log M}{\log M})N}$. Thus, together with Lemma~\ref{lem:Q_{M,N}} we deduce that $P_{M,N}$ has $\ll e^{O(\frac{\log \log M}{\log M})N}$ elements for $M > e^{3 h_{\max}(\T)}$.
\end{proof}
\section{Main proposition}
\label{sec:mainprop}
 In this section we calculate the number of Bowen $N$-balls needed to cover each partition element of $P_{M,N}$. We recall that a  Bowen $N$-ball is a translate of $B_N=\bigcap_{n=0}^{N-1}a^{n}B_{\eta}^{G} a^{-n}$ in $X$. We note that the Bowen balls are balls in a different metric that induces the same topology.

Let $M,N \ge 1$ be given. Let $P(\mathcal V)$ be a partition element of $P_{M,N}$ as in \eqref{eqn:P(V)} such that $P(\mathcal V)\subset X_{<M}$. We recall that by definition $\mathcal V $ is a subset of $ [0,N-1]$ and for all $n \in [0,N-1]$ we have that $\T^n(x) \in X_{\ge M}$ if and only if $n \in \mathcal V.$ In particular, the additional restrictive assumption above is equivalent to $\mathcal V$ being in $ (0,N-1]$. 
\begin{prop}
\label{prop:main}
The partition element $P(\mathcal V) \in P_{M,N}$ with $P(\mathcal V) \subset X_{<M}$ can be covered by 
$$\ll_M c_0^{\frac{h_{\max}(\T)}{\log M}N}e^{h_{\max}(\T)(N-\frac{|\mathcal V|}{2})}$$
 Bowen $N$-balls for some universal constant $c_0 \ge 1$. 
\end{prop}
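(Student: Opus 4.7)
The plan is a slice-and-propagate covering. First, cover the precompact set $X_{<M}$ (by Lemma~\ref{lem:mahler}) by $\ll_M 1$ open balls $x_iB_\eta^G$, and for each base point $x_0$ estimate the number of Bowen $N$-balls needed to cover $x_0B_\eta^G \cap P(\mathcal V)$. The Bowen $N$-ball $B_N$ has diameter $\eta$ in the stable subgroup $U^-$ and in the centralizer $L$, while in each unstable factor $U^+_j$ its linearization has diameter $\eta e^{-(N-1)a_j}$ in the real case (proportionally in the complex case), so its total unstable volume is $\asymp \eta^{\dim U^+}e^{-h_{\max}(\T)(N-1)}$. Consequently the Bowen-ball count is controlled by the unstable volume of $x_0B_\eta^G \cap P(\mathcal V)$; without restriction this gives the trivial bound $e^{h_{\max}(\T)N}$, and to match the proposition this volume must be cut down by a factor of $e^{-h_{\max}(\T)|\mathcal V|/2}$.

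The savings come from the short-vector structure of excursions. For each maximal excursion interval $V_m^{\mathcal V}=[b_m^{\mathcal V},b_m^{\mathcal V}+\ell_m^{\mathcal V}]$, membership of $x_0$ in $P(\mathcal V)$ forces the lattice $\T^{b_m^{\mathcal V}-1}(x_0)$ to contain a primitive vector $v_m$ with $\|\T^n v_m\| \le 1/M$ for all $n\in[1,\ell_m^{\mathcal V}+1]$ and with each component ratio $|v_{m,j}''|/|v_{m,j}'|$ pinned by the refined partition $P_{M,N}$ to a window of multiplicative width $e^{a_j}$. A lower unipotent $U^+$-perturbation $u$ applied at time $b_m^{\mathcal V}-1$ shifts $v_{m,j}'$ by $u_j v_{m,j}''$ in the real factors (and applies the analogous complex shear in the complex factors). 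For the perturbed trajectory to remain in $P(\mathcal V)$, one must have $\|\T^n(v_m\cdot u)\|\le 1/M$ at every $n\in[1,\ell_m^{\mathcal V}+1]$; enforcing this at each $n$, and using the pinned ratio $J_j(V_m^{\mathcal V})$ to estimate $|v_{m,j}''|$ sharply rather than by its extremal value, yields an admissibility diameter $|u_j|\ll e^{-a_j\ell_m^{\mathcal V}/2}$ in each real factor (with the complex factor contributing the multiplicity $\delta_j=2$ in the exponent). Rescaling each constraint back from time $b_m^{\mathcal V}-1$ to time $0$ via the conjugation $a^{-(b_m^{\mathcal V}-1)}$ on $U^+$, and combining across the $k$ excursions (which compose multiplicatively in $U^+$ since the $a$-conjugations rescale distinct excursions to separated multiplicative scales), the admissible unstable volume of $x_0B_\eta^G \cap P(\mathcal V)$ is reduced by a factor $\prod_{m,j}e^{-\delta_j a_j\ell_m^{\mathcal V}/2}=e^{-h_{\max}(\T)|\mathcal V|/2}$, modulo an $O((\ell_m^{\mathcal V})^{r+s})$ boundary correction per excursion coming from the endpoint imprecision of each ratio interval.

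Dividing the restricted volume by that of $B_N\cap U^+$ yields the count $\ll_M e^{h_{\max}(\T)(N-|\mathcal V|/2)}$, and the product of the $k$ boundary corrections $\prod_m O((\ell_m^{\mathcal V})^{r+s})$ is $\ll c_0^{(h_{\max}(\T)/\log M)N}$ for a universal $c_0\ge 1$, since $k\ll Nh_{\max}(\T)/\log M$ as established in the proof of Lemma~\ref{lem:Q_{M,N}}. The main obstacle will be rigorously establishing the per-excursion diameter bound $|u_j|\ll e^{-a_j\ell_m^{\mathcal V}/2}$: the constraint must be verified uniformly over the entire excursion, not only at a single endpoint, which is precisely why the refined partition $P_{M,N}$ records the ratio intervals $J_j(V_m^{\mathcal V})$, as these locate the time of minimum of each $j$-component and pin $|v_{m,j}''|$ sharply. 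The uniqueness of the primitive short vector (Lemma~\ref{lem:shortvector}) makes the analysis unambiguous, and the product of volume restrictions across excursions is truly multiplicative rather than redundant because successive $a$-conjugations rescale their respective constraints to distinct multiplicative scales in $U^+$.
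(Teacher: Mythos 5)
Your overall strategy matches the paper's: cover $X_{<M}$ by finitely many $\eta$-neighborhoods, propagate the cover forward excursion-by-excursion, and show that each excursion interval $V_m^{\mathcal V}=[b,b+\ell]$ buys a savings factor $e^{-h_{\max}(\T)\ell/2}$ in the unstable direction. However, there is a genuine gap in your key per-excursion estimate.

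Your claim that the admissibility diameter is $|u_j|\ll e^{-a_j\ell_m^{\mathcal V}/2}$ \emph{in each factor} is false. The constraint $\|\T^n(v\cdot u)\|\le 1/M$ for $n\in[1,\ell+1]$ is a \emph{product} over $j$, so it cannot be split into per-factor diameter bounds by ``enforcing it at each $n$.'' What the ratio window $J_j(V_m^{\mathcal V})$ actually gives (this is the paper's Lemma~\ref{lem:res}) is $|t_j|\ll e^{b-n_j}$, where $n_j$ is the left endpoint of $J_j$. Crucially, $n_j$ can take any value in $\{-\infty\}\cup[b,b+\ell a_j]$, and when a factor is growing during the excursion ($s_j\le 0$, so $J_j=I_{0,j}$, $n_j=-\infty$) there is \emph{no} constraint on $t_j$ beyond $|t_j|<\eta$. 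For example with $r=2$, $s=0$, $a_1=a_2=1$, $s_1\in(0,1]$ and $s_2>\ell+1$: one finds $|t_1|\ll 1$ (no savings) and $|t_2|\ll e^{-\ell}$ (full savings). The total is still $e^{-h_{\max}(\T)\ell/2}$, but it is concentrated in factor $2$; assigning $e^{-\ell/2}$ to each factor is simply wrong. The missing ingredient is the \emph{coupling} between the $n_j$: the norm-decrease constraint $\|\T^{\ell+1}v\|<\|v\|$ forces the $n_j$ to satisfy the linear inequality of the paper's Lemma~\ref{lem:relation}, and only after combining that inequality with the per-factor bounds $|t_j|\ll e^{b-n_j}$ (paper's Lemma~\ref{lem:vol}) does one get the total count $\ll e^{h_{\max}(\T)\ell/2}$. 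Your argument skips this coupling step entirely; it would be correct in the single-factor case (e.g.\ $\SL_2(\R)$), but the whole point of the refined partition $P_{M,N}$ is that in the product case the savings can be distributed arbitrarily across factors and must be tracked via the ratio windows.

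A secondary inaccuracy: the factor $c_0^{h_{\max}(\T)N/\log M}$ does not arise from the $O((\ell_m)^{r+s})$ ``endpoint imprecision'' of the ratio windows --- that count is the number of choices of $J(V_m^{\mathcal V})$, which is part of the cardinality of $P_{M,N}$ (Lemma~\ref{lem:P_{M,N}}), not of the Bowen-ball cover of a single fixed $P(\mathcal V)$. In Proposition~\ref{prop:main} the partition element $P(\mathcal V)$, and hence the ratio window, is already fixed; the $c_0^{k+k'}$ factor accumulates one bounded constant per interval (excursion or gap) in the inductive propagation, and is then controlled by the bound $k,k'\ll Nh_{\max}(\T)/\log M$ from the proof of Lemma~\ref{lem:Q_{M,N}}.
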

\begin{proof}[Proof of Proposition~\ref{prop:mainn}]
We note that we partitioned any element $Q(\mathcal V) \in Q_{M,N}$ into elements $P(\mathcal V)$ of $P_{M,N}.$ From Lemma~\ref{lem:P_{M,N}} we know that there are at most $\ll e^{O(\frac{\log \log M}{\log M})N}$ such elements of $P_{M,N}$ for $M>e^{3h_{\max}(\T)}$. On the other hand, using Proposition~\ref{prop:main} we deduce that each such element $P(\mathcal V)$ can be covered by $\ll_M c_0^{\frac{h_{\max}(\T)}{\log M}N}e^{h_{\max}(\T)(N-\frac{|\mathcal V|}{2})}$ Bowen $N$-balls. By enlarging the implicit constant in $O(\cdot)$ we may assume that $c_0^{\frac{h_{\max}(\T)}{\log M}N} \le e^{O(\frac{\log \log M}{\log M})N}$. Thus, we conclude that any partition element $Q(\mathcal V)$ with $\mathcal V \in (0,N-1]$ can be covered by $\ll_M e^{O(\frac{\log \log M}{\log M})N} e^{h_{\max}(\T)(N-\frac{|\mathcal V|}{2})}$ Bowen $N$-balls which completes the proof.
\end{proof}
We now return to the statement of Proposition~\ref{prop:main}. Roughly, we note that since the number of elements of $P_{M,N}$ is slow exponential as $N \to \infty $, to calculate the entropy it is sufficient to consider the covers of each partition element $P_{M,N}$ by Bowen balls. Since we only need to count the number of covers of most of the space $X$ (cf. Lemma~\ref{lem:entropy}) it is reasonable to consider only the partitions $P(\mathcal V) \in P_{M,N}$ with $P(\mathcal V) \subset X_{<M}$. It is not hard to show that each such partition element $P(\mathcal V)$ can be covered by $\ll e^{h_{\max}(\T)N}$ Bowen $N$-balls. Thus, the significant factor in Proposition~\ref{prop:main} is $e^{-\frac{h_{\max}(\T)}{2}|\mathcal V|}$. Before we start proving Proposition~\ref{prop:main} we need some preliminary preparations. 
\subsection{Restrictions of perturbations}
\label{sec:res}
If there are two points in $X_{<M}$ which are $\eta$-close to each other such that they both stay above height $M$ for some time interval, then we would like to say that these points must be even closer to each other in the unstable direction $U^+$. This is not true in general. However, if additionally we know that they are in the same partition element of $P_{M,N}$ then we will show that this is indeed the case. 

As before let $U^+, \,U^-,\, L$ be the unstable, stable, and centralizer subgroups of $G$ w.r.t. $a$ respectively. We naturally embed $U^+$ into $\R^r \times \C^s.$ We let $u^+({\bf t}) \in U^+$ be the element that corresponds to ${\bf t}=(t_1,t_2,\dots,t_{r+s}) \in \R^r\times \C^s.$
For the rest of the section we fix one $P(\mathcal V) \in P_{M,N}$ as in \eqref{eqn:P(V)}. Recall that $V_m^{\mathcal V}=[b_m^{\mathcal V},b_m^{\mathcal V}+\ell_m^{\mathcal V}], m=1,2,\dots,k$ are the maximal intervals such that $\mathcal V =\cup_{m=1}^k V_m^{\mathcal V}$. We fix $V_m^{\mathcal V}$ for some $m=1,\dots,k$ and for simplicity we denote $V_m^{\mathcal V}=[b,b+\ell]$. From \eqref{eqn:P(V)} we know that $P(\mathcal V)=\bigcap_{m=1}^{k} Q(J(V_m^{\mathcal V}))$ for some $Q(J(V_m^{\mathcal V}))$ as in \eqref{eqn:Q(J_m)}, namely
\begin{multline}
\label{eqn:Q(J_m)'}
Q(J(V_m^{\mathcal V}) ):=\{x \in Q : \exists v\in\T^{b-1}(x) \text{ such that } \eqref{eqn:vm} \text{ holds and }\\
  |v_j''|=|v_j'| e^{s_j} \text{ for some } s_j \in J_j(V_m^{\mathcal V})-b\}.
\end{multline}
\begin{lem}
\label{lem:res}
Let $x,y \in P(\mathcal V) \cap \T^{N-1}(X_{<M})$ with $\T^{b-1}(y)= \T^{b-1}(x) u^+({\bf t})g$ for some $u^+({\bf t})\in B_{\eta/2}^{U^+}$ and $g\in B_{\eta/2}^{U^-L}$. Then for any $j \in \{1,2,\dots,r+s\}$ we have $|t_j| \ll e^{b-n_j}$ where $n_j$ is the left end point of the interval $J_j(V_m^{\mathcal V})$.
\end{lem}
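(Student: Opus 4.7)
The plan is to use the uniqueness of primitive short vectors (Lemma~\ref{lem:shortvector}) to identify the short vectors of $\T^{b-1}(x)$ and $\T^{b-1}(y)$ up to a unit, and then to extract the bound on $|t_j|$ from the partition information encoded in $J(V_m^{\mathcal V})$ via a component-wise triangle inequality.

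By $x, y \in P(\mathcal V) \subset Q(J(V_m^{\mathcal V}))$ and \eqref{eqn:Q(J_m)'}, there exist primitive vectors $v \in \T^{b-1}(x)$ and $w \in \T^{b-1}(y)$ both satisfying \eqref{eqn:vm}, with $|v_j''|=|v_j'|e^{s_j}$ and $|w_j''|=|w_j'|e^{s_j^y}$ for some $s_j, s_j^y \in J_j(V_m^{\mathcal V})-b$. Since the action of $u^+({\bf t})g$ on the module is $\mathcal O$-linear, the vector $v\cdot u^+({\bf t})g$ is a primitive vector of $\T^{b-1}(y)=\T^{b-1}(x)u^+({\bf t})g$. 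From \eqref{eqn:vm} and $M>e^{3h_{\max}(\T)}$ one gets $\|v\| \ll e^{h_{\max}(\T)/2}/M \ll 1$, and since $u^+({\bf t})g \in B_\eta^G$ is close to the identity, $\|v\cdot u^+({\bf t})g\|<1$ as well; similarly $\|w\|<1$. Lemma~\ref{lem:shortvector} then forces $w = \lambda \bigl(v\cdot u^+({\bf t})g\bigr)$ for some unit $\lambda \in \mathcal O^\times$.

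The ratio $|(\cdot)_j''|/|(\cdot)_j'|$ of any nonzero vector is invariant under multiplication by the scalar $\sigma_j(\lambda)$, so
\[
e^{s_j^y} \;=\; \frac{|w_j''|}{|w_j'|} \;=\; \frac{|(vu^+({\bf t})g)_j''|}{|(vu^+({\bf t})g)_j'|}.
\]
A direct matrix computation gives $(vu^+({\bf t}))_j = (v_j'+t_jv_j'',\,v_j'')$, and the further action of $g_j \in U^-L$ perturbs each entry by at most $O(\eta)(|v_j'|+|v_j''|)$. Consequently $|(vu^+({\bf t})g)_j'| = |v_j'+t_jv_j''|+O(\eta|v_j''|)$ and $|(vu^+({\bf t})g)_j''| = |v_j''|(1+O(\eta))$, whence $|v_j'+t_jv_j''| \ll |v_j''|e^{-s_j^y}$. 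Reading off \eqref{eqn:I^{(m,j)}}, the left endpoint of $J_j(V_m^{\mathcal V})-b$ is $n_j-b$, so $s_j,s_j^y > n_j-b$ (the case $n_j=-\infty$ being vacuous); combining with $|v_j'|=|v_j''|e^{-s_j}$ and the triangle inequality,
\[
|t_j|\,|v_j''| \;\le\; |v_j'+t_jv_j''|+|v_j'| \;\ll\; |v_j''|\bigl(e^{-s_j^y}+e^{-s_j}\bigr) \;\ll\; |v_j''|\,e^{b-n_j},
\]
whence $|t_j| \ll e^{b-n_j}$ as required.

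The main obstacle is the very first step: one must verify that $v\cdot u^+({\bf t})g$ is genuinely of norm $<1$ so that Lemma~\ref{lem:shortvector} applies cleanly, and one must keep the lower-order $O(\eta)$ perturbations from $g\in U^-L$ under control in the subsequent component-wise comparison. Both are afforded by the hypothesis $M > e^{3h_{\max}(\T)}$, which makes $\|v\|$ so small that no $O(\eta)$ perturbation can destroy either the short-vector status or the final triangle inequality, which then absorbs the perturbation into the implicit constant.
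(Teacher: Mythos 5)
Your proof is correct and follows essentially the same approach as the paper's: write the short vector of $\T^{b-1}(y)$ as the $u^+({\bf t})g$-image of the short vector of $\T^{b-1}(x)$, compute the component-wise ratio $|(\cdot)_j''|/|(\cdot)_j'|$ using that $g \in U^-L$ only perturbs by $O(\eta)$, and close with a triangle inequality against the left endpoint $n_j$ of $J_j(V_m^{\mathcal V})$. The one place you supply more detail than the paper is where the paper simply writes $w=vu^+({\bf t})g$: you explicitly invoke Lemma~\ref{lem:shortvector} to identify $w$ with $v\cdot u^+({\bf t})g$ up to a unit $\lambda\in\mathcal O^\times$, and observe that the ratio $|(\cdot)_j''|/|(\cdot)_j'|$ is invariant under multiplication by $\sigma_j(\lambda)$, which makes that step airtight.
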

\begin{proof}
If $J_j(V_m^{\mathcal V})=[-\infty,b]=I_{0,j}(V_m^{\mathcal V})$ then $n_j=-\infty$ and in this case the lemma is trivial. So, we may assume $J_j(V_m^{\mathcal V})\neq I_{0,j}(V_m^{\mathcal V})$ so that $n_j \ge b.$

 By maximality of $V_m^{\mathcal V}$ and the fact that $\mathcal V \subset (0,N-1]$ we know that 
 \begin{align*}
 \T^{b-1}(x),\T^{b-1}(y) &\in X_{<M} \text{ and }\\ 
 \T^n(\T^{b-1}(x)),\T^n(\T^{b-1}(y)) &\in X_{\ge M} \text{ for any } n \in [1,l+1].
\end{align*}
Thus there exist vectors $v \in \T^{b-1}(x)$ and $w\in \T^{b-1}(y)$ such that \eqref{eqn:vm} holds, that is
\begin{equation}
\label{eqn:vmw}
\|\T^n(v)\|,\|\T^n(w)\|\le 1/M  \text{ for } n \in [1,\ell+1].
\end{equation}
On the other hand, from \eqref{eqn:Q(J_m)'} for $v,w$ in the standard notation we know that
  $$|v_j''|=|v_j'|e^{s_j} \text{ and } |w_j''|=|w_j'|e^{r_j} \text{ for some } s_j,r_j \in J_j(V_m^{\mathcal V})-b.$$
We note that $v_j'' \neq 0 \neq w_j''$ since $(v_j',v_j''),(w_j',w_j'')\neq (0,0)$ (they are rows of matrices of determinant equal to 1) and $s_j,r_j \ge 0$. In particular, if $n_j$ is the left end point of the interval $J_j(V_m^{\mathcal V})$ then we have
 \begin{equation}
 \label{eqn:vw}
 \frac{|v_j'|}{|v_j''|} \le e^{b-n_j} \text{ and } \frac{|w_j'|}{|w_j''|} \le e^{b-n_j}. \end{equation}
 Also, we know that $w=vu^+({\bf t})g.$ So, for $g=(g_1,\dots,g_{r+s})$ we have $(w_j',w_j'')=(v_j',v_j'')\left(\begin{array}{cc} 1&0\\t_j &1 \end{array}\right)g_j=(v_j'+t_jv_j'',v_j'')g_j$ (under the assumption that $a_j \ge 0$ where $a_j$ is as in the definition of $a$). For $g_j=\left(\begin{array}{cc} d&u\\0 &1/d \end{array}\right)$ we obtain that
$$(w_j',w_j'')=(d(v_j'+t_jv_j''),u(v_j'+t_jv_j'')+v_j''/d).$$
 Now from \eqref{eqn:vw} we get
$$e^{b-n_j}\ge \frac{|w_j'|}{|w_j''|}=\frac{|d(v_j'+t_jv_j'')|}{|u(v_j'+t_jv_j'')+v_j''/d|} \gg \frac{|v_j'+t_jv_j''|}{|v_j''|}=\left |\frac{v_j'}{v_j''}+t_j\right |$$
since $d$ is close to $1$ and $u$ is close to $0$. Together with \eqref{eqn:vw} we deduce that
$$|t_j| \ll e^{b-n_j}.$$
\end{proof}
Lemma~\ref{lem:res} alone does not tell us if $x,y$ should be even closer to each other in the unstable direction since for example $n_j$ could be equal to $b$. Even if $n_j>b$ we still do not know an effective lower bound for $n_j$. This is because we have only considered one part of the defining properties of $Q(J(V_m^{\mathcal V}))$.  We have not considered the fact that $x,y$ stay above height $M$ in $[1,\ell+1].$ In the next lemma we use this fact to obtain the relation among the intervals $J_j(V_m^{\mathcal V})$.
\begin{lem}
\label{lem:relation}
Let $J(V_m^{\mathcal V})$ be as in \eqref{eqn:J_m} and consider $x \in Q(J(V_m^{\mathcal V}))$ with $v \in \T^{b-1}(x)$ as in \eqref{eqn:Q(J_m)'}. Let $S=\{s_1,...,s_{r+s}\}$ and  $\{i_1,...,i_L\}$ be the subset of $S$ which are $\le 0$, let $j_1,...,j_C$ be the subset of $S$ such that $s_{j_i} \in (0,(\ell+1)a_{j_i})$, and let $k_1,...,k_R$ be the subset of $S$ such that $s_{k_i} > (\ell+1)a_{k_i}$. In particular, $L+C+R=r+s.$ Then
$$(\ell +1)\left(\sum_{n=1}^L( a_{i_n} k_{i_n})+\sum_{n=1}^C (a_{j_n}k_{j_n})-\sum_{n=1}^R (a_{k_n}k_{k_n})\right)<2\sum_{n=1}^C (s_{j_n}k_{j_n}). $$
\end{lem}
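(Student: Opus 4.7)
The plan is to compare $\log\|v\|$ with $\log\|\T^{\ell+1}(v)\|$, show the difference is strictly negative, and rearrange to obtain the claim. Both norms are computed coordinate-wise: using $|v_j''|=|v_j'|e^{s_j}$, the $j$-th factor of $\|\T^n(v)\|$ equals $|v_j'|\exp\max\bigl(na_j/2,\;s_j-na_j/2\bigr)$, since the factors $e^{\pm in\theta_j}$ have modulus one.

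Specializing to $n=0$ and $n=\ell+1$ and using the trichotomy on $s_j$ that defines the index sets, the $j$-th factor at $n=0$ vs.\ at $n=\ell+1$ is $|v_j'|$ vs.\ $|v_j'|e^{(\ell+1)a_j/2}$ when $j\in\{i_n\}$; $|v_j'|e^{s_j}$ vs.\ $|v_j'|e^{(\ell+1)a_j/2}$ when $j\in\{j_n\}$; and $|v_j'|e^{s_j}$ vs.\ $|v_j'|e^{s_j-(\ell+1)a_j/2}$ when $j\in\{k_n\}$. Taking logs, weighting by the $\delta_j$'s from the definition of $\|\cdot\|$ (these are the $k_{i_n},k_{j_n},k_{k_n}$ appearing in the statement) and subtracting, the $\log|v_j'|$ contributions cancel and one gets
$$\log\|\T^{\ell+1}(v)\|-\log\|v\|=\tfrac{\ell+1}{2}\Bigl(\sum_{n=1}^L \delta_{i_n}a_{i_n}+\sum_{n=1}^C \delta_{j_n}a_{j_n}-\sum_{n=1}^R \delta_{k_n}a_{k_n}\Bigr)-\sum_{n=1}^C \delta_{j_n}s_{j_n}.$$

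To conclude I would show this left-hand side is strictly negative. By \eqref{eqn:vm} we have $\|\T^{\ell+1}(v)\|\le 1/M$. For the matching lower bound on $\|v\|$ I would exploit the maximality of $V_m^{\mathcal V}=[b,b+\ell]$ as an interval of $\mathcal V$: since $b-1\notin\mathcal V$, the point $\T^{b-1}(x)$ lies in $X_{<M}$, so by definition of $\h$ every nonzero vector of the $\mathcal O$-module $\T^{b-1}(x)$, in particular $v$, satisfies $\|v\|>1/M$. Combining the two estimates forces $\log\|\T^{\ell+1}(v)\|-\log\|v\|<0$, and multiplying the resulting inequality by $2$ is exactly the claimed bound.

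The most delicate step is the coordinate-by-coordinate case analysis: one must check that the dominant entry of $(\T^n(v))_j$ agrees with the claimed expression in all three $s_j$-regimes simultaneously at the two times $n=0$ and $n=\ell+1$, reconcile the boundary value $s_j=(\ell+1)a_j$ with the half-open convention defining the intervals $I_{n,j}(V_m^{\mathcal V})$, and handle the degenerate case $s_j=-\infty$ (i.e.\ $v_j''=0$) via the convention $e^{-\infty}=0$ so that it falls naturally into the $\{i_n\}$ group. Once this bookkeeping is done, the rest of the argument is a one-line comparison.
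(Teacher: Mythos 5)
Your proposal is correct and follows essentially the same route as the paper: compute the $j$-th coordinate of $\|\T^n(v)\|$ via the case distinction on $s_j$, compare $n=0$ with $n=\ell+1$, and then use $\|\T^{\ell+1}(v)\|\le 1/M$ together with $\|v\|>1/M$ (the latter from $\T^{b-1}(x)\in X_{<M}$, which follows from the maximality of $V_m^{\mathcal V}$ and $\mathcal V\subset(0,N-1]$) to force the ratio below $1$. The only cosmetic difference is that you work with logarithms and the expression $\max(na_j/2,\,s_j-na_j/2)$, while the paper writes out the ratio $\|\T^{\ell+1}(v)\|/\|v\|$ coordinate by coordinate and then reads off the exponent; and you are explicit about the boundary and degenerate cases ($s_j=(\ell+1)a_j$, $s_j=\pm\infty$), which the paper handles tacitly.
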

\begin{proof}
Let us consider the $j$-th component vector $(v_j',v_j'')$ of $v$. $\T$ acts on $v$ and hence it acts on each of its components and we have 
$$\T^n((v_j',v_j''))=(v_j'e^{in\theta_j}e^{na_j/2},v_j''e^{-in\theta_j}e^{-na_j/2})$$
where as before $\theta_j=0$ if $j\le r$, and $a_j \ge 0$ for any $j \in [1,r+s].$Thus,
\begin{equation*}
|\T^n((v_j',v_j''))|=\max\{|v_j'e^{na_j/2}|,|v_j''e^{-na_j/2}|\}=\begin{cases} |v_j''|e^{-na_j/2} & \text{if } na_j < s_j \\ |v_j'|e^{na_j/2} & \text{if } na_j \ge s_j
\end{cases}
\end{equation*}
since $|v_j'|e^{s_j/2}=|v_j''|e^{-s_j/2}$. We also note that 
$$|(v_j',v_j'')|=\begin{cases} |v_j'|& \text{if }  s_j \le 0  \\ |v_j''|& \text{if } s_j> 0\end{cases}.$$
 Together we get
\begin{equation}
\label{eqn:T^n}
\frac{|\T^{\ell+1}((v_j',v_j''))|}{|(v_j',v_j'')|}=\begin{cases} e^{\frac{(\ell+1)a_j}{2}} & \text{if }  s_j \le 0  \\ e^{\frac{(\ell+1)a_j}{2}-s_j} &\text{if } s_j \in (0,(\ell+1)a_j]\\ e^{-\frac{(\ell+1)a_j}{2}} & \text{if } s_j>(\ell+1)a_j.
\end{cases}
\end{equation}
By the assumption \eqref{eqn:vm} on the vector $v \in (\R^2)^r\times (\C^2)^s$ we have
$$\|v\| > \frac{1}{M} \text{ and }\|\T^n(v)\|\le \frac{1}{M}  \text{ for } n\in [1,\ell+1].$$
In particular, this gives
\begin{equation}
\label{eqn:res}
\frac{\|\T^{\ell+1}(v)\|}{ \|v\|} < 1.
\end{equation}
Now, from \eqref{eqn:T^n} and \eqref{eqn:res} we get
\begin{multline*}
\frac{\prod_{j=1}^{r+s}|\T^{\ell+1}((v_j',v_j''))|^{\delta_j}}{\prod_{j=1}^{r+s}|(v_j',v_j'')|^{\delta_j}}\\
=\exp\left(\frac{\ell+1}{2}\sum_{n=1}^L( a_{i_n} k_{i_n})+\frac{\ell+1}{2}\sum_{n=1}^C (a_{j_n}k_{j_n})-\sum_{n=1}^C (s_{j_n}k_{j_n})-\frac{\ell+1}{2}\sum_{n=1}^R (a_{k_n}k_{k_n})\right)<1.
\end{multline*}
The exponent simplifies to
$$(\ell +1)\left(\sum_{n=1}^L( a_{i_n} k_{i_n})+\sum_{n=1}^C (a_{j_n}k_{j_n})-\sum_{n=1}^R (a_{k_n}k_{k_n})\right)<2\sum_{n=1}^C (s_{j_n}k_{j_n}). $$
\end{proof}
The next lemma shows how we apply the above two lemmas. The reader can skip the lemma and come back when it is mentioned in the proof of Proposition~\ref{prop:main}. Recall the embedding  of $U^+$ into $\R^r\times\C^s.$
\begin{lem}
\label{lem:vol}
 Let $V_m^{\mathcal V}=[b,b+\ell]$ and $Q(J(V_m^{\mathcal V}))$ be as before and let $B',B''$ be given positive constants. Let us consider the set $D:=\{u({\bf t}) \in U^+ : |t_j|< B'\min\{\eta, e^{b-n_j}\}, j=1,\dots,r+s\}$ where $n_j$ is the left end point of the interval $J_j(V_m^{\mathcal V})$. Then the set $D$ can be decomposed into 
 $$\ll e^{\frac{h_{\max}(\T)}{2}\ell}$$
  disjoint sets of the form $E:=\{u({\bf t}) \in U^+ : |t_j|<B''\eta e^{-\ell a_j}, j=1,\dots,r+s\}$.
\end{lem}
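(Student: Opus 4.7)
The plan is to reduce the covering to a product of per-coordinate counts, extract the key sum inequality from Lemma~\ref{lem:relation}, and translate it into a bound on the quantity appearing in the count.

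\textbf{Setup.} Both $D$ and the target sets $E$ are products of balls in $\R^r\times\C^s$ via the embedding of $U^+$. A ball of radius $R$ in $\R^{\delta_j}$ is covered by $\ll (R/r)^{\delta_j}$ balls of radius $r$, so the number of translates of $E$ needed to cover $D$ is
$$\ll\ \prod_{j=1}^{r+s}\Bigl(\frac{\min\{\eta,e^{b-n_j}\}}{\eta\,e^{-\ell a_j}}\Bigr)^{\delta_j}=\exp\Bigl(\ell\,h_{\max}(\T)-\sum_j \delta_j c_j\Bigr),$$
where $c_j:=\max\{0,(n_j-b)+\log\eta\}$ and the constants depending on $B',B'',\eta$ are absorbed into $\ll$. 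Since $\sum_j\delta_j a_j=h_{\max}(\T)$, the target bound $e^{\frac{h_{\max}(\T)}{2}\ell}$ reduces to proving
$$\sum_{j=1}^{r+s}\delta_j c_j\ \ge\ \tfrac{\ell}{2}h_{\max}(\T)-O(1).$$

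\textbf{Applying Lemma~\ref{lem:relation}.} Fix any $x\in Q(J(V_m^{\mathcal V}))$ (nonempty by hypothesis) with its associated vector $v$ and ratios $s_j$, and split $\{1,\dots,r+s\}$ into $L,C,R$ according as $s_j\le 0$, $0<s_j\le(\ell+1)a_j$, or $s_j>(\ell+1)a_j$; let $\alpha,\beta,\gamma$ denote the $\delta_j$-weighted sums $\sum a_j\delta_j$ over $L,C,R$. Introduce the truncation $\tilde s_j:=\min\{s_j,(\ell+1)a_j\}$, so $\tilde s_j=s_j$ on $C$ and $\tilde s_j=(\ell+1)a_j$ on $R$. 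Combining Lemma~\ref{lem:relation}, namely $(\ell+1)(\alpha+\beta-\gamma)<2\sum_C s_j\delta_j$, with the identity $\alpha+\beta+\gamma=h_{\max}(\T)$ rearranges to
$$\sum_{j\in C\cup R}\delta_j\tilde s_j\ >\ \tfrac{\ell+1}{2}h_{\max}(\T).$$

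\textbf{From $\tilde s_j$ to $n_j-b$.} By construction of the intervals $I_{n,j}(V_m^{\mathcal V})$ together with $s_j\in J_j(V_m^{\mathcal V})-b$, the left endpoint satisfies $n_j-b\ge\tilde s_j-a_j$ for every $j\in C\cup R$ (with equality on $R$), while $n_j=-\infty$ and $c_j=0$ on $L$. Using $\beta+\gamma\le h_{\max}(\T)$,
$$\sum_{j\in C\cup R}\delta_j(n_j-b)\ \ge\ \sum_{j\in C\cup R}\delta_j(\tilde s_j-a_j)\ >\ \tfrac{\ell-1}{2}h_{\max}(\T).$$
Since $c_j=0$ on $L$ and $c_j\ge(n_j-b)+\log\eta$ on $C\cup R$, one concludes
$$\sum_j \delta_j c_j\ >\ \tfrac{\ell-1}{2}h_{\max}(\T)-(r+2s)|\log\eta|,$$
which is $\tfrac{\ell}{2}h_{\max}(\T)-O(1)$, completing the argument.

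\textbf{Main obstacle.} The only delicate point is the rearrangement in the second step. Indices in $R$ carry $s_j$ arbitrarily large but only $n_j-b=\ell a_j$ of moderate size, so one cannot feed $s_j$ directly into the final estimate. The truncation $\tilde s_j$ absorbs the excess, and the $R$-term $(\ell+1)\gamma$ appearing with a negative sign on the left of Lemma~\ref{lem:relation} is exactly the budget needed to recover $\sum_R(\ell+1)a_j\delta_j$ on the right, without which the $R$ contribution would ruin the bound.
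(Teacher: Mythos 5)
Your proof is correct and takes essentially the same route as the paper's: compute the per-coordinate covering counts of $D$ by translates of $E$, package the resulting exponent, and then use the inequality from Lemma~\ref{lem:relation} together with $\alpha+\beta+\gamma=h_{\max}(\T)$ and the constraint $s_j\in J_j(V_m^{\mathcal V})-b$ to bound it by $\tfrac{\ell+1}{2}h_{\max}(\T)+O(1)$. Your bookkeeping differs only cosmetically: you fold the three index classes into a single expression via $c_j$ and the truncation $\tilde s_j$, whereas the paper treats the $R$-indices separately (noting those coordinates contribute a factor $\ll 1$) and writes the exponent explicitly over $L$ and $C$; the underlying computation, including the use of $s_j\le n_j-b+a_j$ and the identity for $h_{\max}(\T)$, is the same, and your final constant terms $\tfrac{h_{\max}(\T)}{2}$ and $(r+2s)\lvert\log\eta\rvert$ are absorbed into the implicit constant exactly as the paper absorbs $-\sum_L a_{i_n}\delta_{i_n}+\tfrac{h_{\max}(\T)}{2}$.
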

It is easy to see  that the set $E$ is roughly the unstable part of a Bowen $\ell$-ball which hints the relation to Proposition~\ref{prop:main}. One can see that a unit ball in $U^+$ can be covered by $\ll e^{h_{\max}(\T)\ell}$ translates of the set $E$. However, from the lemma above we see that if we consider a subset of a unit ball in $U^+$ whose elements stay above height $M$ under $\T$ on $[b,b+\ell]$ and moreover if the elements of this subset behave similarly, that is, if the set is in the partition $Q(J(V_m^{\mathcal V}))$ then we have save $h_{\max}(\T)\ell/2$ in the exponent.
\begin{proof}
For any $j=1,2,\dots,r+s$ let us consider the ball around 0 of radius $B'\min\{\eta, e^{b-n_j}\}$ in $\R$ or in $\C$ depending whether $j \le r $ or not and decompose it into the small balls of radius $B''\eta e^{-\ell}$. If $n_j < b$ (in which case $n_j=-\infty$) then there are $\ll e^{\ell a_j}$ small subintervals if $j\le r$ and there are $\ll e^{2\ell a_j}$ small balls if $j>r$. Suppose $n_j \ge b$. If $j \le r$ then there are $\ll e^{\ell a_j+b-n_j}$ small subintervals and if $j >r$ then there are $\ll e^{2(\ell a_j+b-n_j)}$ small balls. We note that if $n_j \ge b+\ell a_j$ (in which case $n_j=b+\ell a_j$) then there are $\ll 1$ small subintervals or $\ll 1$ small balls depending on $j$. We have $i_1,...,i_L,j_1,...,j_C,k_1,...,k_R$ as in Lemma~\ref{lem:relation}. Now, let $i_1',...,i_{L'}'$ be the subset of $\{i_1,...,i_L\}$ which are $\le r$ and $i_1'',...,i_{L''}''$ be the rest. Similarly, we consider the subsets $j_1',...,j_{C'}'$ and $j_1'',...,j_{C''}''$ of $j_1,...,j_C$.

Therefore, the set $D$ contains at most 
\begin{align*}
\ll& 1^R \exp\left(\ell\sum_{n=1}^{L'}a_{i_n'} +2\ell\sum_{n=1}^{L''}a_{i_n''}+\ell\sum_{n=1}^{C'}a_{j_n'}+bC'-\sum_{n=1}^{C'} n_{j_n'} +2(\ell\sum_{n=1}^{C''}a_{j_n''}+bC''-\sum_{n=1}^{C''} n_{j_n''})\right)\\
=&\exp\left(\ell\left(\sum_{n=1}^{L'}a_{i_n'} +2\sum_{n=1}^{L''}a_{i_n''}+\sum_{n=1}^{C'}a_{j_n'}+2\sum_{n=1}^{C''}a_{j_n''}\right)+b(C'+2C'')-\sum_{n=1}^{C'} n_{j_n'} -2\sum_{n=1}^{C''} n_{j_n''})\right)\\
=&\exp\left(\ell\left(\sum_{n=1}^{L}(a_{i_n}k_{i_n}) +\sum_{n=1}^{C}(a_{j_n}k_{j_n})\right)+b\sum_{n=1}^C k_{j_n}-\sum_{n=1}^{C} (n_{j_n}k_{j_n}))\right)
\end{align*}
 many disjoint sets of the form  $E$.
 
On the other hand, Lemma~\ref{lem:relation} gives
$$(\ell +1)\left(\sum_{n=1}^L( a_{i_n} k_{i_n})+\sum_{n=1}^C (a_{j_n}k_{j_n})-\sum_{n=1}^R (a_{k_n}k_{k_n})\right)<2\sum_{n=1}^C (s_{j_n}k_{j_n}) $$ 
where $s_{j_k} \in J_{j_k}(V_m^{\mathcal V})-b=(n_{j_k}-b,n_{j_k}+a_{j_k}-b]$. Thus,
$$(\ell +1)\left(\sum_{n=1}^L( a_{i_n} k_{i_n})+\sum_{n=1}^C (a_{j_n}k_{j_n})-\sum_{n=1}^R (a_{k_n}k_{k_n})\right)<2\sum_{n=1}^C ((n_{j_n}+a_{j_n}-b)k_{j_n}) $$ 
and since $\sum_{n=1}^L( a_{i_n} k_{i_n})+\sum_{n=1}^C (a_{j_n}k_{j_n})+\sum_{n=1}^R (a_{k_n}k_{k_n})=h_r+2h_s=h_{\max}(\T)$ we obtain
$$(\ell +1)\left(2\sum_{n=1}^L( a_{i_n} k_{i_n})+2\sum_{n=1}^C (a_{j_n}k_{j_n})-h_{\max}(\T)\right)<2\sum_{n=1}^C ((n_{j_n}+a_{j_n}-b)k_{j_n}). $$ 
Dividing both sides by $2$ and simplifying the expression we get
\begin{multline*}
\ell\left(\sum_{n=1}^L( a_{i_n} k_{i_n})+\sum_{n=1}^C (a_{j_n}k_{j_n})\right)+b \sum_{n=1}^C k_{j_n}-\sum_{n=1}^C (n_{j_n}k_{j_n})\\
<-\sum_{n=1}^L (a_{i_n} k_{i_n})+\frac{(\ell+1)h_{\max}(\T)}{2}.
\end{multline*}
Hence, the set $D$ can be decomposed into 
\begin{align*}
&\ll \exp\left( -\sum_{n=1}^L (a_{i_n} k_{i_n})+\frac{(\ell+1)h_{\max}(\T)}{2}\right) \ll \exp\left(\frac{ h_{\max}(\T)}{2}\ell\right)
\end{align*}
disjoint sets of the form $E$. 
\end{proof}
\subsection{The proof of Proposition~\ref{prop:main}}
Let $P(\mathcal V) \in P_{M,N}$ be given. Since $X_{<M}$ is pre-compact it suffices to restrict ourselves to a neighborhood $\mathcal{O}$ of some $x_0 \in X_{<M}\cap P(\mathcal V)$. We let $\mathcal{O}=x_0 B_{\eta/2}^{U^+}B_{\eta/2}^{U^-L }$ be a neighborhood of such $x_0 \in X_{<M}$ and define the set $P_{\mathcal O}(\mathcal V)$ by
$$P_{\mathcal O}(\mathcal V )= \mathcal O \cap P(\mathcal V).$$
It suffices to prove that the set $P_{\mathcal O}(\mathcal V)$ can be covered by $\ll c_0^{\frac{h_{\max}(\T)N}{\log M}}e^{h_{\max}(\T)(N-\frac{1}{2}|\mathcal V|)}$ Bowen $N$-balls for some universal constant $c_0 \ge 1$. 

Let us make some observations. If we consider the image of $\mathcal O$ under $\T^n$ we obtain the set
$$\T^n(\mathcal O)=\T^n(x_0) (a^{-n}B_{\eta/2}^{U^+}a^n) a^{-n} B_{\eta/2}^{U^-L} a^n.$$
We see that the $j$th component of the $U^+$-part gets stretched by the factor $e^{na_j}$. Here again we naturally embed $U^+$ into $\R^r\times \C^s$. Under this identification, dividing $(a^{-n}B_{\eta/2}^{U^+}a^n) $ into $\prod_{j=1}^{r+s}\lceil e^{na_j} \rceil^{\delta_j}$ many small parts we obtain the sets of the form
\begin{equation*}
\T^n(x_0) u^+B_{\eta/2}^{U^+} a^{-n} B_{\eta/2}^{U^-L} a^n
\end{equation*}
for some $u^+ \in U^+.$ Now, if we take the pre-image under $\T^n$ of these sets then we obtain the similar sets 
$$\T^{-n}(\T^n(x_0) u^+)a^n B_{\eta/2}^{U^+} a^{-n} B_{\eta/2}^{U^-L} $$
as before. It is not hard to see that the set $\T^{-n}(\T^n(x_0) u^+)a^n B_{\eta/2}^{U^+} a^{-n} B_{\eta/2}^{U^-L}$ is contained in the forward Bowen $n$-ball $\T^{-n}(\T^n(x_0) u^+) B_n^+$. This in particular shows that $\mathcal O$ can be covered by $\ll  e^{(h_r+2h_s)n}$ many forward Bowen $n$-balls which is the reason why the maximal entropy $h_{\max}(\T)$ is $h_r+2h_s$. However, using Lemma~\ref{lem:vol} we will show that we in fact need fewer Bowen balls to cover the set $\mathcal O$.

Let us recall that we decompose $\mathcal V$ into ordered maximal subintervals $V_m^{\mathcal V}$ so that we have
$$\mathcal V=V_1^{\mathcal V} \cup V_2^{\mathcal V} \cup ... \cup V_k^{\mathcal V}. $$
Now we let $[0,N-1] \setminus \mathcal V=W_1\cup W_2 \cup ... \cup W_{k'}$ where $W_m$ are again ordered maximal intervals. We inductively prove the following:

If $[0,b-1]=V_1^{\mathcal V}\cup V_2^{\mathcal V} \cup ... \cup V_{m-1}^{\mathcal V} \cup W_1 \cup W_2 \cup...\cup W_{n'}$ then for some constant $c_0$ the set $P_{\mathcal O}(\mathcal V)$ can be covered by 
$$\le c_0^{m-1+n'} \exp\left(h_{\max}(\T)\left[(b-1)-\frac{(|V_1^{\mathcal V}|+\cdots +|V_{m-1}^{\mathcal V}|)}{2}\right]\right)$$
 pre-images under $\T^{b-1}$ of sets of the form
\begin{equation}
\label{eqn:earlier}
\T^{b-1}(x_0)u^+B_{\eta/2}^{U^+}a^{-b+1}B_{\eta/2}^{U^-L}a^{b-1}.
\end{equation}
For the interval $[0,0]$ the claim is obvious. Now, assume that the claim is true for the interval $[0,b-1]$ as above. In the inductive step, if the next interval is $W_{n'+1}$ then once we divide each set obtained earlier into 
$$\prod_{j=1}^{r+s}\lceil e^{|W_{n'+1}|a_j} \rceil^{\delta_j}  \le c_0 e^{h_{\max}(\T)(|W_{n'+1}|)} $$
 small ones for some constant $c_0$, we just keep all of them. So, assume that the next interval is $V_{m}^{\mathcal V}=[b,b+\ell]$. Let $Y$ be one of the sets \eqref{eqn:earlier} obtained in the earlier step. We would like to estimate the upper bound to cover $Y$ by pre-images under $\T^{\ell}$ of sets of the form
\begin{equation}
\label{eqn:smallsets}
\T^{b-1+\ell}(x_0)u^+({\bf t})B_{\eta/2}^{U^+}a^{-b+1-\ell}B_{\eta/2}^{U^-L}a^{b-1+\ell}.
\end{equation}

We are interested in the points $x\in Y$ for which  $\T^{-b+1}(x)$ is in $Q(J(V_m^{\mathcal V}))$. We know by assumption that $x_0$ is one of them. If $x \in Y$ is another one then by Lemma~\ref{lem:res} there exists ${\bf t} \in B_{\eta/2}^{\R^r\times \C^s}$ such that $x=x_0u^+({\bf t})g$ for some $g \in B_{\eta/2}^{U^-L}$ and for $j \in [1,r+s]$, $|t_j| \ll e^{b-n_j}$ where $n_j$ is the left end point of the interval $J_j(V_m^{\mathcal V})$. Hence the set we are interested in corresponds to the set $D$ in Lemma~\ref{lem:vol} and each set as in \eqref{eqn:smallsets} corresponds to the set $E$ as in Lemma~\ref{lem:vol}. Thus, if necessary enlarging the constant $c_0$ appeared earlier, using Lemma~\ref{lem:vol} we see that once we divide $Y$ into the sets of the form as in \eqref{eqn:smallsets} we only need to keep 
$$\le c_0 e^{\frac{h_{\max}(\T)}{2}\ell}=c_0 e^{h_{\max}(\T)(\ell-\frac{|V_m^{\mathcal V}|}{2})}$$
 many of them. Hence, we conclude that the set $P_{\mathcal  O}(\mathcal V)$ can be covered by
$$\le c_0^{m+n'} \exp\left(h_{\max}(\T)\left[(b-1+\ell)-\frac{(|V_1^{\mathcal V}|+\cdots +|V_m^{\mathcal V}|)}{2}\right]\right)$$
pre-images under $\T^{b+\ell-1}$ of the sets of the form
$$\T^{b+\ell-1}(x_0)u^+(t)B_{\eta/2}^{U^+}a^{-b-\ell+1}B_{\eta/2}^{U^-L}a^{b+\ell-1}.$$
Now, we let $b=N$ to obtain that the set $P_{\mathcal O}(\mathcal V)$ can be covered by 
$$\le c_0^{k+k'} \exp\left(h_{\max}(\T)\left[N-\frac{(|V_1^{\mathcal V}|+\cdots +|V_k^{\mathcal V}|)}{2}\right]\right)$$
many Bowen $N$-balls.
On the other hand, the proof of Lemma~\ref{lem:Q_{M,N}} suggests that $k$ and hence $k'$ are bounded above by
$$\frac{N}{ 2\lfloor\frac{2\log M}{h_r+h_s}\rfloor}+1 .$$
Thus, the set $P_{\mathcal O}(\mathcal V )$ can be covered by 
$$\ll c_0^{\frac{h_{\max}(\T)N}{\log M}}e^{h_{\max}(\T)(N-\frac{|\mathcal V|}{2})}$$
translates of Bowen $N$-balls, which completes the proof.
\qed
\section{Proof of Theorem~\ref{thm:dim}}
\label{sec:dim}
We recall that $a_*=\max\{a_1,a_2,\dots,a_{r+s}\}$ and $D=\dim U^+$. One can easily obtain the following lemma.
\begin{lem}
\label{lem:balls}
For any $N \in \N$, the set $a^N B_{\eta/2}^{U^+}a^{-N} B_{\eta/2}^{U^-L}$ can be covered by $\ll e^{[D a_*-h_{\max}(\T)]N} $ translates of $B_{\frac{\eta}{2} e^{-a_*N}}^{U^+}B_{\frac{\eta}{2}}^{U^-L}$.
\end{lem}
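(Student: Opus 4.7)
\medskip

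\textbf{Proof proposal for Lemma \ref{lem:balls}.}
The plan is to reduce the statement to a straightforward volume count after explicitly identifying $a^N B_{\eta/2}^{U^+}a^{-N}$ as a box in the coordinates of $U^+$. Since the $B_{\eta/2}^{U^-L}$ factor appears unchanged on both sides, it suffices to cover $a^NB_{\eta/2}^{U^+}a^{-N}$ by translates of $B_{\frac{\eta}{2}e^{-a_*N}}^{U^+}$ and then take products with $B_{\eta/2}^{U^-L}$. Under the identification $U^+\cong \R^r\times\C^s$ described in \S~\ref{sec:mainthm}, an element $u^+({\bf t})\in U^+$ has $j$-th entry $t_j$ in the lower-left corner of its $j$-th $2\times 2$ block (using the convention $a_j\ge 0$ so that $U^+$ is lower unipotent). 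A direct computation, analogous to the one performed in \S~\ref{sec:res}, shows that
\[
a^N u^+({\bf t})\, a^{-N} \;=\; u^+\!\bigl(t_1 e^{-Na_1},\, t_2 e^{-Na_2},\,\dots,\, t_{r+s}e^{-Na_{r+s}}\bigr),
\]
so that $a^N B_{\eta/2}^{U^+}a^{-N}$ corresponds to the box
$\{{\bf t} : |t_j|<\tfrac{\eta}{2}e^{-Na_j},\ j=1,\dots,r+s\}$.

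Next I would cover this box by translates of the symmetric box
$\{{\bf t}:|t_j|<\tfrac{\eta}{2}e^{-a_*N}\}$ corresponding to $B_{\frac{\eta}{2}e^{-a_*N}}^{U^+}$. Since $a_*=\max_j a_j\ge a_j$ for each $j$, the $j$-th side of the large box has length $\eta e^{-Na_j}$ while the side of the covering box has length $\eta e^{-a_*N}$, giving $\lceil e^{N(a_*-a_j)}\rceil$ translates needed in each real coordinate and $\lceil e^{N(a_*-a_j)}\rceil^2$ in each complex coordinate. Multiplying over the $r+s$ blocks yields
\[
\prod_{j=1}^{r+s}\lceil e^{N(a_*-a_j)}\rceil^{\delta_j}\;\ll\;\exp\!\Bigl(N\sum_{j=1}^{r+s}(a_*-a_j)\delta_j\Bigr)\;=\;e^{[Da_*-h_{\max}(\T)]N},
\]
using $\sum_j\delta_j=r+2s=D$ and $\sum_j a_j\delta_j=h_r+2h_s=h_{\max}(\T)$. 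Taking the product of this cover with the single factor $B_{\eta/2}^{U^-L}$ (which is identical on both sides of the desired statement) produces a cover of $a^NB_{\eta/2}^{U^+}a^{-N}B_{\eta/2}^{U^-L}$ of the required cardinality.

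The only delicate point is checking that the metric defining the ball $B_{\eta/2}^{U^+}$ is quasi-isometric to the product coordinate metric on $\R^r\times\C^s$, so that box-covers translate to cover estimates for $B_{\eta/2}^{U^+}$ up to an absolute constant absorbed into the $\ll$ notation; this is standard for left-invariant metrics on nilpotent Lie groups for $\eta$ smaller than the injectivity radius of $\exp$ fixed at the beginning of \S~\ref{sec:mainthm}. I do not anticipate any real obstacle beyond making this identification explicit.
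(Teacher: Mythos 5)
Your proposal is correct, and it is precisely the straightforward volume count the authors have in mind: the paper offers no proof of Lemma~\ref{lem:balls}, dismissing it with the remark that ``one can easily obtain'' it, so you have simply supplied the omitted argument. The key computation — that conjugation by $a^N$ scales the $j$-th $U^+$-coordinate by $e^{-Na_j}$, and that covering the resulting box by boxes of uniform side $\eta e^{-a_*N}$ costs $\prod_j \lceil e^{N(a_*-a_j)}\rceil^{\delta_j}\ll e^{(Da_*-h_{\max}(\T))N}$ translates, with the $B^{U^-L}_{\eta/2}$ factor carried along unchanged — is exactly right and matches the conventions fixed in \S~\ref{sec:mainprop} ($a_j\ge 0$, $U^+$ lower unipotent, $U^+\hookrightarrow\R^r\times\C^s$). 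One small point you might make explicit: your identity $\sum_j\delta_j=r+2s=D$ presumes every $a_j>0$; if some $a_j=0$ then the $j$-th block of $U^+$ is trivial, $D<r+2s$, and the product should run only over $j$ with $a_j>0$ — but then $\sum_{a_j>0}\delta_j=D$ and $\sum_{a_j>0}a_j\delta_j=h_{\max}(\T)$ still hold, so the final bound is unaffected. The quasi-isometry remark handling the comparison between the metric ball $B^{U^+}_{\eta/2}$ and the coordinate box is also appropriate, and for the abelian $U^+$ at hand it is immediate for $\eta$ below the injectivity radius.
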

The proof of Proposition~\ref{prop:main} together with Lemma~\ref{lem:balls} at once give
\begin{prop}
\label{prop:dimmain}
The set $P(\mathcal V) $ can be covered by 
$$\ll_M c_0^{\frac{h_{\max}(\T)}{\log M}N}e^{(D N a_* -\frac{h_{\max}(\T) |\mathcal V|}{2})}$$
 translates of $B_{\frac{\eta}{2} e^{-a_*N}}^{U^+}B_{\frac{\eta}{2}}^{U^-L}$ in $X$ for some universal constant $c_0 \ge 1$. 
\end{prop}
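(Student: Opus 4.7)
The plan is to stack Lemma~\ref{lem:balls} on top of Proposition~\ref{prop:main}: the former gives the right cover of a single Bowen $N$-ball by translates of the product set $B^{U^+}_{\frac{\eta}{2}e^{-a_*N}}B^{U^-L}_{\frac{\eta}{2}}$, while the latter already controls how many Bowen $N$-balls are needed to cover $P(\mathcal V)$.

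The first step is to recall from Proposition~\ref{prop:main} that $P(\mathcal V)$ is covered by $\ll_M c_0^{\frac{h_{\max}(\T)}{\log M}N} e^{h_{\max}(\T)(N-\frac{|\mathcal V|}{2})}$ Bowen $N$-balls. I would then unwind the inductive construction in the proof of Proposition~\ref{prop:main} to observe that each retained piece (after letting $b=N$ in that induction) is a translate of the explicit product set $a^N B^{U^+}_{\eta/2} a^{-N} B^{U^-L}_{\eta/2}$ — the discrepancy between $N$ and $N-1$ is harmless and can be absorbed in the universal constant $c_0$. This identification is important because Lemma~\ref{lem:balls} is stated precisely for sets of that form.

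The second step is to apply Lemma~\ref{lem:balls} to each such Bowen $N$-ball, obtaining a refinement into $\ll e^{(Da_* - h_{\max}(\T))N}$ translates of $B^{U^+}_{\frac{\eta}{2}e^{-a_*N}}B^{U^-L}_{\frac{\eta}{2}}$. Multiplying the two counts,
\[
c_0^{\frac{h_{\max}(\T)}{\log M}N}\, e^{h_{\max}(\T)(N-\frac{|\mathcal V|}{2})}\cdot e^{(Da_*-h_{\max}(\T))N}
= c_0^{\frac{h_{\max}(\T)}{\log M}N}\, e^{DNa_* - \frac{h_{\max}(\T)|\mathcal V|}{2}},
\]
which is exactly the bound claimed.

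There is essentially no obstacle beyond bookkeeping: the only thing to be careful about is ensuring that the Bowen ball shape produced by the induction in the proof of Proposition~\ref{prop:main} matches the hypothesis of Lemma~\ref{lem:balls}, so that the two covers compose cleanly. Since both ingredients are already in place, the argument is nothing more than composing them and multiplying the exponential factors, as the author indicates with the phrase ``at once give.''
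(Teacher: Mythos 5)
Your proposal is correct and takes exactly the same route as the paper: cover $P(\mathcal V)$ by Bowen $N$-balls via Proposition~\ref{prop:main}, refine each one using Lemma~\ref{lem:balls}, and multiply the two bounds. The observation about absorbing the $N$ versus $N-1$ discrepancy into $c_0$ is a fair point of care, but the argument is the one the paper intends with ``at once give.''
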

\begin{proof}
From the proof of Proposition~\ref{prop:main} we know that the set $P(\mathcal V) $ can be covered by 
$$\ll_M c_0^{\frac{h_{\max}(\T)}{\log M}N}e^{h_{\max}(\T)(N -\frac{ |\mathcal V|}{2})}$$ translates of $a^N B_{\eta/2}^{U^+}a^{-N} B_{\eta/2}^{U^-L}$ in $X$. Thus, Lemma~\ref{lem:balls} finishes the proof.
\end{proof}
For any fixed $N\in \N$ and for any $x\in X$ we associate $\mathcal V_x \subset [0,N-1]$ such that for any $n\in [0,N-1]$
$$\T^n(x) \in X_{\ge M} \text{ if and only if } n \in \mathcal V_x. $$
As in the introduction, let $\nu$ be a measure on $X$ of dimension $d$ in the unstable direction. For any $\delta>0$ we note that $\nu(B_{\frac{\eta}{2} e^{-a_*N}}^{U^+}B_{\frac{\eta}{2}}^{U^-L})\ll e^{-a_* (d-\delta)N}$. Using Proposition~\ref{prop:main} together with Lemma~\ref{lem:P_{M,N}} and Lemma~\ref{lem:Q_{M,N}} it is easy to obtain the following (cf. \cite[Lemma~6.2]{EinKad}).
\begin{lem}
\label{lem:kappa}
For any $\delta>0, \kappa \in[0,1]$ and for any $N,M\ge 1$ large, we have
\begin{multline*}
\nu({x \in X_{< M}: |\mathcal V_x|> \kappa N}) \\
\ll_{M,\delta} \exp\left(\left[Da_*-\frac{h_{\max}(\T) }{2}\kappa -(d-\delta)a_*+O\left(\frac{\log \log M}{\log M}\right)\right]N\right).
\end{multline*}
\end{lem}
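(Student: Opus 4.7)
The plan is to decompose the set $E_\kappa := \{x \in X_{<M} : |\mathcal{V}_x| > \kappa N\}$ into partition elements of $P_{M,N}$, cover each such element by small balls of the form $B_{(\eta/2)e^{-a_*N}}^{U^+}B_{\eta/2}^{U^-L}$ using Proposition~\ref{prop:dimmain}, and then control the $\nu$-measure of each ball via the dimension assumption on $\nu$. The counting ingredients are already in place, so the argument is essentially a bookkeeping of three exponential estimates.

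More precisely, I would first observe that for $x \in X_{<M}$ one has $0 \notin \mathcal{V}_x$, so $\mathcal{V}_x \subset (0,N-1]$; hence $E_\kappa$ is a disjoint union of those partition elements $P(\mathcal{V}) \in P_{M,N}$ with $P(\mathcal{V}) \subset X_{<M}$ and $|\mathcal{V}| > \kappa N$. By Lemma~\ref{lem:P_{M,N}} there are at most $\ll e^{O(\log\log M/\log M)N}$ such partition elements. By Proposition~\ref{prop:dimmain}, each of them can be covered by
$$\ll_M c_0^{h_{\max}(\T)N/\log M}\, e^{DNa_*-h_{\max}(\T)|\mathcal{V}|/2} \;\le\; c_0^{h_{\max}(\T)N/\log M}\, e^{DNa_*-h_{\max}(\T)\kappa N/2}$$
translates of $B_{(\eta/2)e^{-a_*N}}^{U^+}B_{\eta/2}^{U^-L}$. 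For $N$ large the radius $(\eta/2)e^{-a_*N}$ is smaller than any prescribed $\epsilon'>0$, so the dimension hypothesis on $\nu$ guarantees that each such translate has $\nu$-mass $\ll_\delta e^{-a_*(d-\delta)N}$.

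Multiplying the three bounds (partition count, covers per partition, measure per ball) and absorbing the factor $c_0^{h_{\max}(\T)N/\log M}$ into the $O(\log\log M/\log M)N$ error term in the exponent, exactly as in the deduction of Proposition~\ref{prop:mainn} from Proposition~\ref{prop:main}, yields the claimed estimate. I do not expect any real obstacle here: all of the analytic and geometric work has already been done in Proposition~\ref{prop:dimmain} and in the definition of dimension in the unstable direction. The only care required is routine exponent arithmetic together with the verification that, for $N$ large, the radii produced by the covering lemma are admissible for the dimension estimate with the fixed $\delta$.
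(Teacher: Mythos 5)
Your proof is correct and takes essentially the same route the paper sketches (the paper merely cites Proposition~\ref{prop:main} together with Lemmas~\ref{lem:Q_{M,N}} and~\ref{lem:P_{M,N}}; your use of Proposition~\ref{prop:dimmain} is exactly that combination repackaged with Lemma~\ref{lem:balls}). The decomposition of $\{x\in X_{<M}:|\mathcal V_x|>\kappa N\}$ into partition elements $P(\mathcal V)\subset X_{<M}$ with $|\mathcal V|>\kappa N$, the count of such elements, the per-element covering bound, the per-ball mass estimate from the dimension hypothesis for $N$ large enough that $(\eta/2)e^{-a_*N}<\epsilon'$, and the absorption of $c_0^{h_{\max}(\T)N/\log M}$ into the $O(\log\log M/\log M)N$ term all check out.
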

\begin{proof}[Proof of Theorem~\ref{thm:dim}]
We follow the proof of \cite[Theorem~1.6]{EinKad}. The conclusion of the theorem is trivial when $d\le D-\frac{h_{\max}(\T)}{2a_*}$ so that we may assume  $d>D-\frac{h_{\max}(\T)}{2a_*}$. We first estimate an upper bound for $\mu_N(X_{\ge M})$ when $M, N\ge 1$ large. We have
\begin{align*}
\mu_N(X_{\geq M})&=\frac{1}{N}\sum_{n=0}^{N-1}\nu(\T^{-n}(X_{\geq M}))\\
&=\frac{1}{N}\sum_{n=0}^{N-1}\nu(X_{< M} \cap \T^{-n}(X_{\geq M}))+\frac{1}{N}\sum_{n=0}^{N-1}\nu(X_{> M} \cap \T^{-n}(X_{\geq M}))\\
&\le\frac{1}{N}\sum_{n=0}^{N-1}\nu(X_{< M} \cap \T^{-n}(X_{\geq M}))+\nu(X_{\ge M}).
\end{align*}
It suffices to estimate is $\frac{1}{N}\sum_{n=0}^{N-1}\nu(X_{< M} \cap \T^{-n}(X_{\geq M}))$. 
For this, we note that
\begin{multline*}
\frac{1}{N}\sum_{n=0}^{N-1}\nu(X_{\le M} \cap \T^{-n}(X_{\geq M}))\\
=\frac{1}{N}\sum_{n=0}^{N-1} \sum_{W \subset [0,N]} \nu(\{x \in X_{< M}: V_x=W\}\cap \T^{-n}(X_{\ge M})),
\end{multline*}
where the term $\nu(\{x \in X_{< M}: V_x=W\}\cap \T^{-n}(X_{\ge M}))$ is either 0 or is equal to $\nu(\{x \in X_{< M}: V_x=W\})$. Switching the order of summation yields 
\begin{align*}
&=\frac{1}{N}\sum_{W \subset [0,N-1]}|W|\nu(\{x \in X_{< M}: V_x=W\})\\
&=\frac{1}{N}\sum_{i=1}^{N}i\nu(\{x \in X_{< M}: |V_x|=i\})\\
&= \frac{1}{N}\sum_{i=1}^{\lfloor \kappa N \rfloor}i\nu(\{x \in X_{< M}: |V_x|=i\})+\frac{1}{N}\sum_{i=\lceil \kappa N \rceil }^{N}i \nu(\{x \in X_{< M}: |V_x|=i\})\\
&\le \frac{1}{N}\lfloor \kappa N \rfloor\nu(X_{< M})+\frac{1}{N}N\nu(\{x \in X_{< M}: |V_x|>\kappa N\})
\end{align*}
Let $K(M,\delta)>0$ be the implicit constant appeared in Lemma~\ref{lem:kappa}. Then we obtain
$$\frac{1}{N}\sum_{n=0}^{N-1}\nu(X_{<M} \cap \T^{-n}(X_{\geq M}))\le \kappa+K(M,\delta) e^{\left(Da_*-\frac{h_{\max}(\T) }{2}\kappa -(d-\delta)a_*+O\left(\frac{\log \log M}{\log M}\right)\right)N}.$$
Therefore we get that
\begin{equation}
\mu_N(X_{\ge M})\le \epsilon(M)+\kappa+K(M,\delta)e^{\left(Da_*-\frac{h_{\max}(\T) }{2}\kappa -(d-\delta)a_*+O\left(\frac{\log \log M}{\log M}\right)\right)N}.
\end{equation}
By assumption we have $d>D-\frac{h_{\max}(\T)}{2a_*}$ so that $2a_*(D-d)/(h_{\max}(\T))<1$. Now, for any $\kappa \in (2a_*(D-d)/(h_{\max}(\T)),1]$ we may pick $\delta>0$ small enough so that
$$Da_*-\frac{h_{\max}(\T) }{2}\kappa -(d-\delta)a_*+O\left(\frac{\log \log M}{\log M}\right)<0$$
for sufficiently large $M$. Thus, for any $\epsilon>0$ we may choose $M$ sufficiently large so that
  $$\mu_N(X_{\ge M})\le \kappa+\epsilon$$
  which gives in the limit that $\mu(X)>1- \kappa.$ This holds for any $\kappa > 2a_*(D-d)/(h_{\max}(\T))$. Thus,
  $$\mu(X)\ge 1- \frac{2a_*(D-d)}{h_{\max}(\T)}.$$
\end{proof}

\bibliographystyle{plain}
\bibliography{mybib}
\end{document}